\newcommand*{\abs}[1]{\left|#1\right|}
\newcommand{\Prob}{\mathbf P}
\renewcommand{\P}{\mathbf P}
\newcommand{\E}{\mathbf E}
\newcommand{\R}{\mathbb R}
\newcommand{\ind}{\mathbf 1}
\newtheorem{theorem}{Theorem}[section]
\newtheorem{lemma}[theorem]{Lemma}
\newtheorem{proposition}[theorem]{Proposition}
\newtheorem{corollary}[theorem]{Corollary}
\newtheorem{rem}{Remark}
\begin{document}

\begin{frontmatter}

% "Title of the Paper"
\title{Consistency of the drift  parameter estimator for the discretized fractional  Ornstein--Uhlenbeck process  with Hurst index  $H\in(0,\frac12)$}
\runtitle{Consistency of the drift  parameter estimator}

% indicate corresponding author with \corref{}
% \author{\fnms{John} \snm{Smith}\thanksref{t2}\corref{}\ead[label=e1]{smith@foo.com}\ead[label=e2,url]{www.foo.com}}
% \thankstext{t2}{Thanks to somebody}
% \address{line 1\\ line 2\\ \printead{e1}\\ \printead{e2}}

\author{\fnms{K\c{e}stutis} \snm{Kubilius}\thanksref{t1}\ead[label=e1]{kestutis.kubilius@mii.vu.lt}}
\address{Vilnius University, Institute of Mathematics and Informatics,\\
Akademijos 4, LT-08663, Vilnius, Lithuania\\
\printead{e1}}
%\and
\author{\fnms{Yuliya} \snm{Mishura}\thanksref{t1}\corref{}\ead[label=e2]{myus@univ.kiev.ua}}
\thankstext{t1}{This research was funded by a grant (No. VIZ-TYR-110) from the Research Council of Lithuania and  by a grant  No. Dnr 300-1508-09   of Ume{\aa} University, Sweden}
\address{Department of Probability Theory, Statistics and Actuarial Mathematics,\\
Taras Shevchenko National University of Kyiv,\\
64 Volodymyrska, 01601 Kyiv, Ukraine\\
\printead{e2}}
%\and
\author{\fnms{Kostiantyn} \snm{Ralchenko}\ead[label=e3]{k.ralchenko@gmail.com}}
\address{Department of Probability Theory, Statistics and Actuarial Mathematics,\\
Taras Shevchenko National University of Kyiv,\\
64 Volodymyrska, 01601 Kyiv, Ukraine\\
\printead{e3}}
%\and
\author{\fnms{Oleg} \snm{Seleznjev}\thanksref{t1}\ead[label=e4]{oleg.seleznjev@math.umu.se}}
\address{Institute of Mathematics and Mathematical Statistics,\\
University of Ume\aa,\\
SE-90187 Ume\aa, Sweden\\
\printead{e4}}

\runauthor{K.~Kubilius et al.}

\begin{abstract}
We consider Langevin equation  involving fractional Brownian motion with Hurst index $H\in(0,\frac12)$. Its solution is the fractional Ornstein--Uhlenbeck process and with unknown drift parameter $\theta$. We construct the estimator that is similar in form to maximum likelihood estimator for Langevin equation with standard Brownian motion. Observations are discrete in time. It is assumed that the interval between observations is $n^{-1}$, i.e.  tends to zero (high frequency data) and the number of observations increases to infinity as  $n^m$ with $m>1$. It is proved that for positive $\theta$ the estimator is strongly consistent for any $m>1$ and for negative $\theta$ it is consistent when $m>\frac{1}{2H}$.
\end{abstract}

\begin{keyword}[class=MSC]
\kwd{60G22}
\kwd{60F15}
\kwd{60F25}
\kwd{62F10}
\kwd{62F12}
\end{keyword}

\begin{keyword}
\kwd{fractional Brownian motion}
\kwd{fractional Ornstein--Uhlenbeck process}
\kwd{short-range dependence}
\kwd{drift parameter estimator}
\kwd{consistency}
\kwd{strong consistency}
\kwd{discretization}
\kwd{high-frequency data}
\end{keyword}

% history:
% \received{\smonth{1} \syear{0000}}

%\tableofcontents

\end{frontmatter}

\section{Introduction}
Let $(\Omega, \mathfrak{F}, \P)$ be a complete probability space. We consider fractional Brownian motion $B^H=\{B^H_t,\;t\geq 0\}$ on this probability space, that is, the centered Gaussian process with the covariance function
$$R(t,s)=\frac12\left(s^{2H}+t^{2H}-|t-s|^{2H}\right).$$ We restrict ourselves to the case $H\in\left(0,\frac12\right)$ and consider the continuous (and even H\"{o}lder up to order $H$) modification that exists due to the Kolmogorov theorem.
Introduce  the Langevin equation,\begin{equation}\label{equation}
X_t=x_0+\theta\int_0^tX_s\,ds+B^H_t,
\quad t\ge0,
\quad H\in(0,\tfrac12).
\end{equation}
According to Proposition A.1 from \cite{CKM}, this equation has the unique solution that is named fractional Ornstein--Uhlenbeck process and can be presented as
\begin{equation}\label{solution}
X_t=x_0e^{\theta t}+\theta e^{\theta t}\int_0^te^{-\theta s}B^H_s\,ds +B^H_t, \;t\geq 0.
\end{equation}

The goal of the paper is to construct   consistent (strongly consistent) estimator of the unknown drift parameter $\theta$ by discrete observations of the process $X$.

The problem of the estimation of the drift parameter $\theta$ in the linear equation containing fBm and in the equation \eqref{equation} when the Hurst index $H\ge\frac12$ was investigated in many works. For linear models, mention only papers \cite{bertin} and \cite{HuXZ}.
Drift parameter estimators for fractional Ornstein--Uhlenbeck process with continuous time when the whole trajectory of $X$ is observed, were studied in \cite{BESO,HN,KB}.
Kleptsyna and Le Breton \cite{KB} constructed the maximum likelihood estimator and proved its strong consistency for any $\theta\in\R$. They also investigated the asymptotic behaviour of the bias and mean square error of this estimator.
The sequential maximum likelihood estimation was considered in \cite{Prakasa04}.
Hu and Nualart \cite{HN} proved that in the ergodic case ($\theta<0$) the least square estimator
\begin{equation}\label{lse_cont}
\widehat\theta_T=\frac{\int_0^TX_t\,dX_t}{\int_0^TX_t^2\,dt},
\end{equation}
is strongly consistent for all $H\ge\frac12$ and asymptotically normal for $H\in[\frac12,\frac34)$.
They also obtained the strong consistency and asymptotic normality of the estimator
\begin{equation}\label{HN_cont}
\widehat\theta_T=\left(\frac{1}{H\Gamma(H)T}\int_0^TX_t^2\,dt\right)^{-\frac{1}{2H}}.
\end{equation}
In \cite{BESO} the corresponding non-ergodic case $\theta>0$ was considered and the strong consistency of the least square estimator \eqref{lse_cont} was proved for $H>\frac12$.
It was obtained also that
$e^{\theta t}\left(\widehat\theta_t-\theta\right)$
converges in law to $2\theta\mathcal C(1)$ as $t\to\infty$, where $\mathcal C(1)$ is the standard Cauchy distribution.
Minimum contrast estimators in continuous and discrete case were studied in \cite{Bishwal11}.
The distributional properties of maximum likelihood, minimum contrast and least square estimators were explored in \cite{Tanaka13}.
For the two-parameter generalization see \cite{CT}.

In \cite{CES,Es-Sebaiy,ESN} the discretized version of \eqref{lse_cont} is considered, namely
\begin{equation}\label{lse_dis}
\widehat\theta_n=\frac{\sum_{i=1}^{n}X_{t_{i-1}}\left(X_{t_i}-X_{t_{i-1}}\right)}%
{\Delta_n\sum_{i=1}^{n}X_{t_{i-1}}^2},
\end{equation}
where the process $X$ was observed in the points $t_i = i\Delta_n$, $i=0,\dots,n$, such that
$\Delta_n\to0$ and $n\Delta_n\to\infty$ as $n\to\infty$.
In \cite{CES} the ergodic case $\theta<0$ was studied, the strong consistency of this estimator was proved for $H\ge\frac12$ and the almost sure central limit theorem was obtained for $H\in\left(\frac12,\frac34\right)$.
The non-ergodic case $\theta>0$ was considered in Es-Sebaiy and Ndiaye \cite{ESN}.
They proved the strong consistency of the estimator \eqref{lse_dis} for $H\in\left(\frac12,1\right)$ assuming that
$\Delta_n\to0$ and $n\Delta_n^{1+\alpha}\to\infty$ as $n\to\infty$ for some $\alpha>0$. The same result was obtained for the estimator
\[
\widehat\theta_n=\frac{X_{t_n}^2}
{2\Delta_n\sum_{i=1}^{n}X_{t_{i-1}}^2}.
\]

In \cite{HuSong13,xiaoZX} the following discretized version of the estimator \eqref{HN_cont} was considered
\[
\widehat\theta_n=-\left(\frac{1}{nH\Gamma(2H)}\sum_{k=1}^nX_{k\Delta}^2\right)^{-\frac{1}{2H}},
\]
where $\theta<0$ and the process $X$ was observed in the points $\Delta,2\Delta,\dots,n\Delta$ for some fixed $\Delta>0$.
Hu and Song \cite{HuSong13} proved the strong consistency of the estimator for $H\ge\frac12$ and the asymptotic normality for $\frac12\le H<\frac34$.

In \cite{BI, ZXZN} more general situation is studied, where the equation has the following form
{$dX_t=\theta X_t dt+\sigma dB_t^H$, $t>0$,}
and $\vartheta=(\theta,\sigma,H)$ is the unknown parameter, $\theta<0$.
Consistent and asymptotically Gaussian estimators of the parameter $\theta$ are proposed
by the discrete observations of the sample path
$(X_{k\Delta_n}, k=0,\dots,n)$
for $H\in(\frac12,\frac34)$,
where
$n\Delta_n^p\to\infty$, $p>1$,
and $\Delta_n\to0$ as $n\to\infty$.
In \cite{ZXZN} the strongly consistent estimator is constructed for the scheme, when $H> \frac12$, the time interval $[0,T]$ is fixed  and the process is observed at points $h_n,2h_n,\dots,nh_n$, where $h_n=\frac{T}{n}$.

In \cite{DMM,Mendy} the so called sub-fractional Ornstein--Uhlenbeck process was studied, where the process $B^H_t$ in \eqref{equation} was replaced with a sub-fractional Brownian motion. In \cite{DMM} the maximum likelihood estimator for such process was constructed,
in \cite{Mendy} the estimator \eqref{lse_cont} was investigated in the case $\theta>0$. The maximum likelihood drift parameter estimators  for  fractional Ornstein--Uhlenbeck process and even more   general processes involving fBm with Hurst index
from the whole interval $(0,1)$ were constructed and studied in \cite{tudorviens}. These estimators involve singular kernels therefore are more complicated to study and simulate. To the best of our knowledge, it is the only paper when discretized estimates of the drift parameter are constructed in the case $H<\frac12$. However, the observations of the real financial markets demonstrate that the Hurst index often falls below the level of $\frac12$, taking values around 0.45--0.49 (\cite{multi}). In order to consider the case of $H<\frac12$ and to overcome the technical difficulties connected with  singular kernels, we construct comparatively simple estimator that is similar in form to the maximum  likelihood estimator for Langevin equation with standard Brownian motion. Observations are assumed to be discrete in time and we assume  that the interval between observations is $n^{-1}$, i.e.  tends to zero, so we consider high frequency data. At the same time,  the number of observations increases to infinity with the speed   $n^m$ with $m>1$. Let $n\ge1$, $t_{k,n}=\frac{k}{n}$, $0\le k\le n^m$, where $m\in \mathbb{N}$ be some fixed number.
Suppose that we observe $X$ at the points
$\{t_{k,n},n\ge1,0\le k\le n^m\}$.
Consider the estimator
\begin{equation}\label{main-main}
\widehat{\theta}_n(m)=\frac{\sum_{k=0}^{n^m-1}X_{k,n}\Delta X_{k,n}}%
{\frac1n\sum_{k=0}^{n^m-1}X_{k,n}^2},
\end{equation}
where
$X_{k,n}=X_{t_{k,n}}$,
$\Delta X_{k,n}=X_{k+1,n}-X_{k,n}$.

By \eqref{equation}, estimator $\widehat{\theta}_n(m)$ from \eqref{main-main} can be represented in the following form, which is more convenient for evaluation:
\begin{equation}\label{estimator}
{\widehat{\theta}_n(m)=\theta+\frac{\theta\sum_{k=0}^{n^m-1}X_{k,n}
\int_{\frac{k}{n}}^{\frac{k+1}{n}}\left(X_s-X_{k,n}\right)ds
+\sum_{k=0}^{n^m-1}X_{k,n}\Delta B^H_{k,n}}{\frac1n\sum_{k=0}^{n^m-1}X_{k,n}^2}\,.}
\end{equation}
It is proved that for positive $\theta$ the estimator is strongly consistent for any $m>1$ and for negative $\theta$ it is consistent for $m>\frac{1}{2H}$.

Our paper is organized as follows. In Section \ref{section2} we consider auxiliary result, namely,    bounds with probability 1 for the values and increments of fractional Brownian motion and fractional Ornstein--Uhlenbeck process.  The bounds are factorized to the increasing non-random function and random variable not depending on time. {In Section \ref{section3} we} get the bounds for the numerator of the estimator, while in Section  \ref{section4} we relate discretized integral sum in the denominator of the estimator to the corresponding integral $\int_0^t X_s^2\,ds$. This relation is convenient for some values of parameters because it is easier to apply   L'H\^{o}pital's rule to the integral $\int_0^t X_s^2\,ds$ than Stolz--Ces\`{a}ro theorem to the sum  $\sum_{k=0}^{n^m-1}X_{k,n}^2$  with  terms   depending  on $n$.  {Section \ref{section5} contains} two main theorems, Theorem \ref{main1} and Theorem \ref{main2}  on strong consistency for $\theta>0$ and consistency for $\theta<0$. Section \ref{sectionAp} contains some auxiliary results and Section \ref{section7}  is devoted to numerics.

\section{Bounds for the values and increments of fractional Brownian motion and fractional Ornstein--Uhlenbeck process}\label{section2}

In what follows we shall use next auxiliary estimates for the rate of asymptotic growth with probability 1 of the fractional Brownian motion and its increments. Throughout the paper considering functions of the form $t^p\log t,\; p>0$ we suppose that $0\cdot\infty=0$.
\begin{proposition}
\label{pr:1}
\begin{enumerate}
\item [(i)] For any $p>1$ and any $H\in(0,1)$ there exists nonnegative random variable $\xi(p,H)$ such that
    \begin{equation}\label{est001}
    \sup_{0\le s\le t}\abs{B^H_s}\le\left(\left(t^H|\log t|^p\right)\vee1\right)\xi(p,H),
    \end{equation}
   and there exists such number $c_\xi(p,H)>0$ that for any $0<y<c_\xi(p,H)$,
   $\E\exp\{y\xi^2(p,H)\}<\infty.$

\item [(ii)] For any $q>\frac12$ and any $H\in(0,1)$ there exists nonnegative random variable $\eta(q,H)$ such that for any $0<t_1<t_2<\infty$
   \begin{equation}\label{est002}\abs{B^H_{t_2}-B^H_{t_1}}\leq (t_2-t_1)^H \left(\abs{\log(t_2-t_1)}^{1/2}+1\right) (\log(t_2+2))^q \eta(q,H),
    \end{equation}
    and there exists such number $c_\eta(q,H)>0$ that for any $0<y<c_\eta(q,H)$,
   $\E\exp\{y\eta^2(q,H)\}<\infty.$
\end{enumerate}
\end{proposition}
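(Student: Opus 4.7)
The strategy for both parts is to combine the self-similarity $B^H_{cs}\stackrel{d}{=}c^H B^H_s$ and stationarity of increments of fBm with Gaussian concentration (Borell--TIS or Fernique's theorem) applied on a dyadic decomposition of the time axis, then to close up with Borel--Cantelli.

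For (i), set $M_n:=\sup_{0\le s\le 2^{n+1}}|B^H_s|$. Self-similarity gives $M_n\stackrel{d}{=}2^{(n+1)H}M_0$, while Borell--TIS supplies $\P(M_0>x)\le 2\exp(-x^2/2)$ (using $\sup_{0\le s\le 1}\E(B^H_s)^2=1$). Hence $\P(M_n>C\,2^{(n+1)H}(n+1)^p)\le 2\exp(-c(n+1)^{2p})$, which is summable for any $p>0$ and comfortably so for $p>1$. Borel--Cantelli then yields $M_n\le C\,2^{(n+1)H}(n+1)^p$ almost surely for all large $n$; interpolating over $t\in[2^n,2^{n+1}]$ and absorbing the small-$t$ contribution into the ``$\vee 1$'' term produces the advertised factorization, with $\xi(p,H):=\sup_{t>0,\,0\le s\le t}|B^H_s|/((t^H|\log t|^p)\vee 1)$ finite a.s.

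For (ii), I would proceed in two layers. On each unit block $I_n=[n,n+1]$, stationarity of increments lets me apply the Garsia--Rodemich--Rumsey lemma with $\Psi(x)=e^{x^2}-1$ (equivalently, Dudley's entropy integral) to the Gaussian process $B^H|_{I_n}$, producing a local random constant
\[
K_n := \sup_{t_1,t_2\in I_n,\;t_1\neq t_2}\frac{|B^H_{t_2}-B^H_{t_1}|}{(t_2-t_1)^H(|\log(t_2-t_1)|^{1/2}+1)}
\]
whose law does not depend on $n$ and which satisfies $\P(K_n>x)\le c\exp(-c' x^2)$. A Borel--Cantelli with threshold $C(\log(n+2))^q$ then requires summability of $\sum_n\exp(-c'(\log n)^{2q})$, which holds precisely for $q>\tfrac12$; hence $K_n\le C(\log(n+2))^q$ eventually, and a short triangle-inequality step handles pairs $(t_1,t_2)$ straddling several unit blocks.

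Finally, because each of $\xi(p,H)$ and $\eta(q,H)$ is the supremum of a centred Gaussian family normalized by a deterministic positive envelope and is almost surely finite, Borell--TIS (equivalently Fernique's theorem) delivers $\E\exp(y\xi^2)<\infty$ and $\E\exp(y\eta^2)<\infty$ for all sufficiently small $y>0$. The main obstacle is item (ii): one must choose the GRR weight function (or the metric-entropy decomposition) so that the deterministic factor $(|\log(t_2-t_1)|^{1/2}+1)$ emerges cleanly after integration, while the resulting random constant $K_n$ retains sub-Gaussian tails with a rate that is uniform in $n$.
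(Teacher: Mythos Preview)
Your plan is sound and, in fact, considerably more detailed than what the paper does: the paper simply cites \cite{KMM} for part~(i) and, for part~(ii), defines
\[
\eta(q,H)=\sup_{0\le t_1<t_2\le t_1+1}\frac{|B^H_{t_1}-B^H_{t_2}|}{(t_2-t_1)^H(|\log(t_2-t_1)|^{1/2}+1)(\log(t_2+2))^q}
\]
and asserts that its almost-sure finiteness follows as in Theorem~1 of \cite{MRSS} (where a more complicated fractional-derivative functional is treated). So the paper's ``proof'' is essentially two references, whereas you are sketching the actual machinery: self-similarity plus Borell--TIS and Borel--Cantelli for (i), GRR/entropy on unit blocks plus Borel--Cantelli for (ii), and Fernique/Borell--TIS for the sub-Gaussian moment bounds. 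This is precisely the kind of argument those references carry out, so your route is not genuinely different---it is the unpacked version of what the paper cites.

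Two small points to tighten. First, in (ii) your ``short triangle-inequality step'' for pairs straddling adjacent blocks does not work as stated: if you split $B^H_{t_2}-B^H_{t_1}$ at the integer $n+1$ you get $a^H+b^H$ on the right, which for $H<1$ is \emph{larger} than $(a+b)^H$, not smaller. The standard fix is to use overlapping blocks (say $I_n=[n,n+2]$) so that every pair with $t_2-t_1\le 1$ lies in a single block; note that the paper itself only defines $\eta$ over pairs with $t_2-t_1\le 1$, which is all that is ever needed downstream (see \eqref{est_MRSS}). Second, your remark that summability of $\sum_n\exp(-c'(\log n)^{2q})$ holds ``precisely for $q>\tfrac12$'' is slightly loose---at $q=\tfrac12$ it is still summable once the free constant $C$ is taken large---but since the proposition only asserts $q>\tfrac12$ this does no harm.
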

\begin{proof}
The 1st statement was proved in the paper \cite{KMM}. The 2nd statement follows immediately from the next relation that can be proved similarly to Theorem 1 from \cite {MRSS}, where even more complicated functional than the increment of fractional Brownian motion, more precisely, fractional derivative, was considered. So, we have from Theorem 1, \cite {MRSS}, that for  any $q>\frac12$ and any $H\in(0,1)$  the random variable
    \[
    \eta(q,H)=\sup_{0\le t_1<t_2\le t_1+1} \frac{\abs{B^H_{t_1}-B^H_{t_2}}}{(t_2-t_1)^H
    \left(\abs{\log(t_2-t_1)}^{1/2}+1\right) (\log(t_2+2))^q}
    \]
    is finite almost surely, whence $(ii)$ follows.
\end{proof}
 Now our goal is to estimate the numerator in \eqref{estimator} and compare it to the denominator. At first, we write the bounds for the values of  $X$ and its increments.

\begin{lemma} We have the following bounds for fractional Ornstein-Uhlenbeck process $X$ in terms of supporting fractional Brownian motion:
\begin{itemize}
\item  [$(i)$] Let $\theta>0$. Then for any $t>0$
 \begin{equation}\label{est00}
\sup_{0\leq s\leq t} \abs{X_s}\leq \abs{x_0}e^{\theta t} + \theta e^{\theta t}\int_0^t e^{-\theta s}\sup_{0\leq u\leq s}\abs{B^H_{u}}ds+ \sup_{0\leq s\leq t}\abs{B^H_{s}}
\end{equation} and for any $s\in \left[\frac{k}{n}, \frac{k+1}{n}\right)$
\begin{equation}\label{est11}\begin{gathered}
\sup_{\frac{k}{n}\leq u\leq s}\abs{X_u-X_{k,n}}\le \int_{\frac{k}{n}}^s\left(e^{\theta u}\left(\abs{x_0} +{ {\theta}} \int_0^u e^{-\theta v}\sup_{0\leq z\leq v}\abs{B^H_{z}}dv\right)\right.\\
+\left. \sup_{0\leq z\leq u} \abs{B^H_{z}}\right)du +\sup_{\frac{k}{n}\leq u\leq s}\abs{B^H_{u}-B^H_{k,n}}.
\end{gathered}\end{equation}

\item  [$(ii)$] Let $\theta<0$. Then for any $t>0$
\begin{equation}\label{est0}
\sup_{0\leq s\leq t} |X_s|\leq |x_0| + 2\sup_{0\leq s\leq t} \abs{B^H_{s}}
\end{equation} and for any $s\in \left[\frac{k}{n}, \frac{k+1}{n}\right)$
\begin{equation}\label{est1}\begin{gathered}
\sup_{\frac{k}{n}\leq u\leq s}\abs{X_u-X_{k,n}}\le\frac{\abs{\theta}\abs{x_0}}{n}+\frac{2\abs{\theta}}{n}\sup_{0\leq u\leq s}\abs{B^H_{u}}\\ +\sup_{\frac{k}{n}\leq u\leq s}\abs{B^H_{u}-B^H_{k,n}}.
\end{gathered}\end{equation}
\end{itemize}
\end{lemma}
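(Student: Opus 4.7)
My plan is to derive both the sup-norm bounds and the increment bounds by combining the two available representations of $X$: the explicit formula \eqref{solution} for the pointwise estimates on $\abs{X_s}$, and the integral Langevin equation \eqref{equation} for the increments. The two cases $\theta>0$ and $\theta<0$ differ only in how one controls the term $\theta e^{\theta s}\int_0^{s}e^{-\theta v}B^H_v\,dv$ in \eqref{solution}: for positive $\theta$ it is kept as is, while for negative $\theta$ the decay of $e^{\theta s}$ collapses its contribution to an at most unit factor $1-e^{\theta s}$.

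For part $(i)$, I would start from \eqref{solution}, apply the triangle inequality, and replace each $\abs{B^H_v}$ by the running supremum $\sup_{0\le z\le v}\abs{B^H_z}$ (and similarly for $\abs{B^H_s}$). The three resulting summands are non-decreasing in $s$, since $e^{\theta s}$ is increasing and the integrand is non-negative, so the sup over $s\in[0,t]$ is attained at $s=t$, which gives \eqref{est00} directly. For the increment bound \eqref{est11} I would take the difference on $[k/n,u]$ in \eqref{equation} to write $X_u-X_{k,n}=\theta\int_{k/n}^{u}X_v\,dv+(B^H_u-B^H_{k,n})$, plug the pointwise bound on $\abs{X_v}$ into the integral, and take the supremum over $u\in[k/n,s]$, using monotonicity of the right-hand side.

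For part $(ii)$ the same scheme applies but with a simplifying trick: $e^{\theta s}\le1$ takes care of the $\abs{x_0}$ term, and the elementary identity $\abs{\theta}e^{\theta s}\int_0^{s}e^{-\theta v}\,dv=1-e^{\theta s}\le1$ allows one to bound
\[
\Bigl|\theta e^{\theta s}\int_0^{s}e^{-\theta v}B^H_v\,dv\Bigr|\le (1-e^{\theta s})\sup_{0\le z\le s}\abs{B^H_z}\le \sup_{0\le z\le s}\abs{B^H_z},
\]
which, together with $\abs{B^H_s}\le\sup_{0\le z\le s}\abs{B^H_z}$, yields \eqref{est0}. For the increment \eqref{est1} the same Langevin decomposition is used, but now the simpler bound \eqref{est0} is inserted and $u-k/n\le 1/n$ produces the prefactor $\abs{\theta}/n$. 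The only spot that requires care is the sign bookkeeping in the identity $\abs{\theta}e^{\theta s}\int_0^{s}e^{-\theta v}\,dv=1-e^{\theta s}$ for $\theta<0$; this is precisely what prevents the bounds in $(ii)$ from growing exponentially in time and thereby makes the subsequent ergodic analysis of the estimator feasible.
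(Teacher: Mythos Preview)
Your proposal is correct and follows essentially the same route as the paper: the pointwise bounds \eqref{est00} and \eqref{est0} come directly from the explicit solution \eqref{solution} (with the key identity $\abs{\theta}e^{\theta s}\int_0^s e^{-\theta v}\,dv=1-e^{\theta s}\le1$ for $\theta<0$), and the increment bounds \eqref{est11} and \eqref{est1} are obtained by writing $X_u-X_{k,n}=\theta\int_{k/n}^{u}X_v\,dv+(B^H_u-B^H_{k,n})$ from \eqref{equation} and inserting the respective pointwise bound. The paper's proof is terser but argues exactly as you do; your explicit monotonicity check for passing to the supremum in \eqref{est00} is a useful detail the paper omits.
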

\begin{proof}$(i)$ Bound \eqref{est00} follows immediately from \eqref{solution},
 and bound \eqref{est11} follows immediately from \eqref{est00} and \eqref{equation}.

 $(ii)$ Bound \eqref{est0} follows   from \eqref{solution}:
\begin{equation*}%\label{est2}
\begin{split}
\abs{X_t}&\le\abs{x_0}e^{\theta t}+\abs{\theta}e^{\theta t}
\sup_{0\le s\le t}\abs{B^H_s}\cdot\int_0^te^{-\theta s}ds + \abs{B^H_t}
\leq |x_0| + 2\sup_{0\leq s\leq t} \abs{B^H_{s}}.
\end{split}
\end{equation*}
To establish bound \eqref{est1}, we substitute \eqref{est0} into the following inequality that can be easily obtained  from \eqref{equation}:
 for $s\geq\frac{k}{n}$
\begin{equation*}%\label{est2}
\abs{X_s-X_{k,n}}\le\abs{\theta}\int_{\frac{k}{n}}^s\abs{X_u}du
+\abs{B^H_s-B^H_{k,n}}.
\end{equation*}

\end{proof}

\begin{rem} Plugging  $p=2$ and $q=1$ into the formulae \eqref{est001}--\eqref{est002}, we get the following bounds:
\begin{equation}\label{est_KMM}
\sup_{0\le s\le t}\abs{B^H_s}
\le\left(t^H \log^2 t +1\right)\xi(2,H),
\end{equation}
and for $s\in\left[\frac{k}{n}, \frac{k+1}{n}\right]$
\begin{equation}\label{est_MRSS}
\begin{gathered}
\abs{B^H_{s}-B^H_{\frac{k}{n}}}
\le\left(s-\tfrac{k}{n}\right)^H
\left(\abs{\log\left(s-\tfrac{k}{n}\right)}^{1/2}+1\right)\log(s+2)\eta(1,H)\\ \leq \left(\left(s-\tfrac{k}{n}\right)^H\abs{\log\left(s-\tfrac{k}{n}\right)}^{1/2}+\left(s-\tfrac{k}{n}\right)^H\right)\log(s+2)\eta(1,H).
\end{gathered}
\end{equation}
Function $f(x)=x^r\abs{\log x}^{\frac12}$ is bounded on the interval $(0,1]$ for any $r>0$. Therefore $$\left(s-\tfrac{k}{n}\right)^H\abs{\log\left(s-\tfrac{k}{n}\right)}^{1/2}\leq C\left(s-\tfrac{k}{n}\right)^{H-r}$$
for any $0<r<H$. Furthermore, for $s\in\left[\frac{k}{n}, \frac{k+1}{n}\right]$ we have that $\left(s-\tfrac{k}{n}\right)^H\leq \left(s-\tfrac{k}{n}\right)^{H-r}$. Therefore, we get from \eqref{est_MRSS} that for any $0<r<H$ and for $s\in\left[\frac{k}{n}, \frac{k+1}{n}\right]$
\begin{equation}\label{est_MRSS1}
\begin{gathered}
\abs{B^H_{s}-B^H_{\frac{k}{n}}}
\le C \left(s-\tfrac{k}{n}\right)^{H-r}\log(n^{m-1}+2)\eta(1,H).
\end{gathered}
\end{equation}
It follows immediately from  \eqref{est_KMM} that for $\theta>0$
 \begin{equation}\label{lim1}
 \int_0^t e^{-\theta s}\sup_{0\leq u\leq s}\abs{B^H_{u}}ds\leq \xi(2,H)\int_0^t e^{-\theta s}\left(s^H \log^2 s +1\right)ds\leq C\xi(2,H),\end{equation}
 and therefore both   integrals $\int_0^\infty e^{-\theta s}B^H_{s} ds$ and $\int_0^\infty e^{-\theta s}\sup_{0\leq u\leq s}\abs{B^H_{u}}ds$ exist  with probability 1 and admit the same upper bound $C\xi(2,H)$.
Combining \eqref{est00}--\eqref{est1}, \eqref{est_KMM} and \eqref{est_MRSS1}, we get that for $\theta>0$
\begin{equation*}
\sup_{0\leq u\leq s}  |X_u|\leq |x_0| e^{\theta s}+ C\theta e^{\theta s}\xi(2,H)+ \left(s^H\log^2s+1\right)\xi(2,H),
\end{equation*}
and for $s\in\left[\frac{k}{n}, \frac{k+1}{n}\right]$
\begin{equation*}\begin{gathered}
\sup_{\frac{k}{n}\le u\le s}\abs{X_u-X_{k,n}}
\le \theta \int_{\frac{k}{n}}^{s}\left(e^{\theta u}\left(\abs{x_0}+C\theta\xi(2,H)\right)\right.\\
+\left.(u^H\log^2u+1)\xi(2,H)\right)du
+\left(n^{-H+r}\log n\right)\eta(1,H),
\end{gathered}\end{equation*}
while for $\theta<0$
\begin{equation*}
\sup_{0\leq u\leq s}  |X_u|\leq |x_0| + 2\left( s^H\log^2s+1\right)\xi(2,H),
 \end{equation*} and for $s\in\left[\frac{k}{n}, \frac{k+1}{n}\right]$
\begin{equation*}\begin{gathered}
\sup_{\frac{k}{n}\le u\le s}\abs{X_u-X_{k,n}}
\le\frac{\abs{\theta}\abs{x_0}}{n}+\frac{2\abs{\theta}}{n}\sup_{0\leq u\leq s}\abs{B^H_{u}}\\ +\sup_{\frac{k}{n}\leq u\leq s}\abs{B^H_{u}-B^H_{k,n}}\leq \frac{\abs{\theta}\abs{x_0}}{n} +\frac{2\abs{\theta}}{n}
\left(s^H \log^2 s+1\right)\xi(2,H)\\
+\left(n^{-H+r}\log n\right)
\eta(1,H).
\end{gathered}\end{equation*}
To simplify the notations, we denote by $C$ any constant  whose value  is not important for our bounds. Furthermore, we denote by  $\mathfrak{Z}$ the class of  nonnegative random variables with the property: there exists $C>0$ not depending on $n$ such that $\E\exp\{x\zeta^2\}<\infty$ for any $0<x<C$. For example, $\xi(2,H)+C$ and $\eta(1,H)+C$,  $C\xi(2,H)$ and $C\eta(1,H)$ for any constant $C$ belong to  $\mathfrak{Z}$. Also, note that for fixed $m>1$ and $n>3$ we have the upper bound $\log\left(n^{m-1}+3\right)\leq C\log n$. Moreover, for any $\alpha>0$ there exists such $ n(\alpha)$ that for $n\geq  n(\alpha)$ we have $\log n<n^\alpha$. Taking this into account and using the simplified notations,  we get the bounds with the same  $\zeta \in \mathfrak{Z}$:
for $\theta>0$ we have for any fixed $\alpha>0$, starting with $n\geq  n(\alpha)$:
\begin{equation}\begin{gathered}\label{est31}
\sup_{0\leq u\leq s}  |X_u|\leq  \left(e^{\theta s}+s^H\log^2s\right)\zeta
\end{gathered}\end{equation}
and for $s\in\left[\frac{k}{n}, \frac{k+1}{n}\right]$
{\begin{equation}\label{est41}
\sup_{\frac{k}{n}\le u\le s}\abs{X_u-X_{k,n}}
\leq \left(\tfrac{1}{n}{e^{\theta s}} + \tfrac{1}{n}   s^H \log^2 s  +n^{-H+\alpha} \right)\zeta,
\end{equation}}
while for $\theta<0$
\begin{equation}\begin{gathered}\label{est3}
\sup_{0\leq u\leq s}  |X_u|\leq  \left(1+s^H\log^2s\right)\zeta
\end{gathered}\end{equation}
and for $s\in\left[\frac{k}{n}, \frac{k+1}{n}\right]$
\begin{equation}\begin{gathered}\label{est4}
\sup_{\frac{k}{n}\le u\le s}\abs{X_u-X_{k,n}}
\leq \left(\tfrac{1}{n}+\tfrac{1}{n} s^H \log^2 s +n^{-H+\alpha} \right)\zeta.
\end{gathered}\end{equation}

\end{rem}

\section{Bounds for the numerator of the estimator}\label{section3}  Now we are in position to bound both terms in the numerator of the right-hand side of \eqref{estimator}.
At first,  give the bound with probability 1 for the 1st term in the numerator of \eqref{estimator}. All inequalities claimed in Lemma \ref{lem3.1} hold for any $\alpha>0$ starting with some nonrandom number $n(\alpha)$.
\begin{lemma}\label{lem3.1} $(i)$ Let $\theta>0$. Then for any
$m>1 $ there exists such $\zeta \in \mathfrak{Z}$ that
 \[
\abs{\sum_{k=0}^{n^m-1}X_{k,n}
\int_{\frac{k}{n}}^{\frac{k+1}{n}}\left(X_s-X_{k,n}\right)ds}
\le   \zeta^2 {n}^{-1} e^{2\theta n^{m-1}}.
\]$(ii)$ Let $\theta<0$. Then we have two cases.
\begin{itemize}

\item [(a)] Let  $1<m\leq \frac1H$. Then there exists such $\zeta \in \mathfrak{Z}$ that
\[
\abs{\sum_{k=0}^{n^m-1}X_{k,n}
\int_{\frac{k}{n}}^{\frac{k+1}{n}}\left(X_s-X_{k,n}\right)ds}
\le   \zeta^2n^{mH+m-2H-1+\alpha}.
\]

\item [(b)] Let  $m>\frac1H$. Then there exists such $\zeta \in \mathfrak{Z}$ that
\[
\abs{\sum_{k=0}^{n^m-1}X_{k,n}
\int_{\frac{k}{n}}^{\frac{k+1}{n}}\left(X_s-X_{k,n}\right)ds}
\le   \zeta^2n^{2Hm+m-2H-2+\alpha}.
\]
\end{itemize}

\end{lemma}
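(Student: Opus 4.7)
The plan is to substitute the factorized bounds from the preceding Remark directly into each summand, treat the integral over $[k/n,(k+1)/n]$ as having length $n^{-1}$, and then estimate the resulting sums over $k=0,\dots,n^m-1$ by Riemann comparison with explicit integrals. Logarithmic factors such as $\log^2(k/n)$ and $\log(s+2)$ will be absorbed into factors $n^\alpha$ using the observation, already exploited in the Remark, that $\log n<n^\alpha$ for $n\ge n(\alpha)$.

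Concretely, I would start from
\[
\Bigl|\sum_{k=0}^{n^m-1}X_{k,n}\int_{k/n}^{(k+1)/n}(X_s-X_{k,n})\,ds\Bigr|
\le\sum_{k=0}^{n^m-1}|X_{k,n}|\cdot\tfrac1n\sup_{s\in[k/n,(k+1)/n]}|X_s-X_{k,n}|,
\]
and plug in, for $\theta>0$, bounds \eqref{est31} and \eqref{est41}, and for $\theta<0$, bounds \eqref{est3} and \eqref{est4}. This expresses each summand as a product of two sums of simple terms (exponentials, powers $s^H$, and constants multiplied by $n^{-H+\alpha}$). Expanding, one obtains a finite number of cross terms, each of which is a sum of the form $n^{-\gamma}\sum_{k=0}^{n^m-1}(k/n)^{a}e^{b\theta k/n}$, and the task reduces to identifying, in each regime, which cross term dominates.

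For part $(i)$ with $\theta>0$, the leading cross term comes from $e^{\theta k/n}\cdot n^{-1}e^{\theta(k+1)/n}$; its sum is a geometric series, and using $e^{2\theta/n}-1\sim 2\theta/n$ I would obtain
\[
\sum_{k=0}^{n^m-1}\frac{e^{\theta/n}}{n^2}e^{2\theta k/n}
\sim\frac{e^{2\theta n^{m-1}}}{2\theta n}.
\]
All other cross terms grow at most polynomially in $n$ times $e^{\theta n^{m-1}}$, hence are dominated. For part $(ii)$ the leading candidates are the two cross terms
\[
\tfrac1n(k/n)^H\log^2(k/n)\cdot n^{-H+\alpha}
\qquad\text{and}\qquad
\tfrac1n(k/n)^H\log^2(k/n)\cdot\tfrac1n((k{+}1)/n)^H\log^2((k{+}1)/n),
\]
whose sums, via $\sum_{k=0}^{n^m-1}(k/n)^a\sim n^{ma+m-a}/(a+1)$, evaluate (up to an $n^\alpha$ log factor) to $n^{mH+m-2H-1+\alpha}$ and $n^{2mH+m-2H-2+\alpha}$ respectively. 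Comparing exponents shows that the first dominates exactly when $m\le 1/H$, which gives case $(a)$, and the second dominates when $m>1/H$, giving case $(b)$; the remaining cross terms $n^{m-2}$, $n^{mH+m-H-2+\alpha}$, $n^{m-H-1+\alpha}$ are checked to be absorbed by the winner in each regime.

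The main obstacle will be bookkeeping: expanding the product of three-term and two-term bounds produces six cross terms in each case, and one must verify that the proclaimed dominant term indeed dominates every other one uniformly in $m$ within the stated range. The comparison that requires the most care is $E$ versus $C$ in case $(b)$, where one needs $H(2m-1)\ge 1$; this follows from $m>1/H$ since $1/H>\frac{1}{2H}+\frac12$ for $H<1$. Once these exponent comparisons are done, the conclusion follows because products and sums of finitely many members of $\mathfrak Z$ (such as $\xi(2,H)$ and $\eta(1,H)$) remain in $\mathfrak Z$ after renaming, yielding the single random variable $\zeta^2$ in the stated bounds.
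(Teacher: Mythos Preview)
Your approach is essentially the same as the paper's: the paper too bounds $|X_{k,n}|$ and $\sup_{s\in[k/n,(k+1)/n]}|X_s-X_{k,n}|$ by the factorized estimates \eqref{est31}--\eqref{est4}, expands the product, sums over $k$ (invoking the auxiliary Lemma~\ref{l:2.4}, which is exactly your Riemann-comparison asymptotic $\sum_{k=0}^{n^m-1}(k/n)^a\lesssim n^{ma+m-a}$ with log factors), and then compares the resulting exponents to identify the dominant term in each regime. One small wording issue: it is not true that arbitrary \emph{products} of members of $\mathfrak Z$ remain in $\mathfrak Z$; what you actually use (and what the Remark already arranges) is that a single $\zeta\in\mathfrak Z$, e.g.\ $\zeta=C(\xi(2,H)+\eta(1,H)+1)$, bounds both factors, so the product is controlled by $\zeta^2$ with $\zeta\in\mathfrak Z$---which is exactly the form of the stated inequality.
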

\begin{proof} $(i)$ It follows immediately from \eqref{est31} that \begin{equation}\begin{gathered}\label{est311}
\abs{X_{k,n}}\leq\sup_{0\leq u\leq \tfrac{k+1}{n}}  |X_u|\leq  \left(e^{\theta \frac{k+1}{n}}+\left(\tfrac{k+1}{n}\right)^H\log^2\left(\tfrac{k+1}{n}\right)\right)\zeta.
\end{gathered}\end{equation}
Now we take into account \eqref{est311},  substitute $\tfrac{k+1}{n}$ instead of $s$ into \eqref{est41} and apply Lemma \ref{l:2.4} to get for any $\alpha>0$ the following relations:
\begin{equation}\begin{gathered}\label{est42}
\abs{\sum_{k=0}^{n^m-1}X_{k,n}
\int_{\frac{k}{n}}^{\frac{k+1}{n}}\left(X_s-X_{k,n}\right)ds}
\le
\zeta^2\frac{1}{n}\sum_{k=0}^{n^m-1}\left(e^{\theta\frac{k+1}{n}}+\left(\tfrac{k+1}{n}\right)^H\log^2 \left(\tfrac{k+1}{n}\right)\right)\\ \times \left(\frac{1}{n}e^{\theta\frac{k+1}{n}}+\frac{1}{n}\left(\tfrac{k+1}{n}\right)^H\log^2 \left(\tfrac{k+1}{n}\right)+n^{-H+\alpha}\log n\right)\\=\zeta^2\left(\frac{1}{n^2}\sum_{k=0}^{n^m-1}e^{2\theta\frac{k+1}{n}}
+\frac{2}{n^2}\sum_{k=0}^{n^m-1}e^{\theta\frac{k+1}{n}}\left(\tfrac{k+1}{n}\right)^H\log^2 \left(\tfrac{k+1}{n}\right)\right.\\
+\frac{1}{n^2}\sum_{k=0}^{n^m-1}\left(\tfrac{k+1}{n}\right)^{2H}\log^4 \left(\tfrac{k+1}{n}\right)+n^{-1-H+\alpha}\log n\\ \times\left.\left(\sum_{k=0}^{n^m-1}e^{\theta\frac{k+1}{n}}+\sum_{k=0}^{n^m-1}\left(\tfrac{k+1}{n}\right)^H\log^2 \tfrac{k+1}{n}\right)\right)\\\leq \zeta^2\biggl(\frac{1}{n}e^{2\theta n^{m-1}}+e^{\theta n^{m-1}}\left(n^{H(m-1)+m-2+\alpha}+n^{-H+\alpha}\right)\\
+n^{m-1-H+(m-1)H+\alpha}\biggr)\end{gathered}\end{equation}
Evidently, the term $\frac{1}{n}e^{2\theta n^{m-1}}$ dominates and other terms are negligible, whence the proof of $(i)$ follows.

$(ii)$ According to \eqref{est3}, now
\begin{equation*}\begin{gathered}%\label{est312}
\abs{X_{k,n}}\leq\sup_{0\leq u\leq \frac{k+1}{n}}  |X_u|\leq  \left(1+\left(\tfrac{k+1}{n}\right)^H\log^2\tfrac{k+1}{n}\right)\zeta.
\end{gathered}\end{equation*} Substituting  $\tfrac{k+1}{n}$ instead of $s$ into \eqref{est4}, we get  the following relations:
\begin{equation}\begin{gathered}\label{est5}
\abs{\sum_{k=0}^{n^m-1}X_{k,n}
\int_{\frac{k}{n}}^{\frac{k+1}{n}}\left(X_s-X_{k,n}\right)ds}
\le
\zeta^2\frac{1}{n}\sum_{k=0}^{n^m-1}\left(1+\left(\tfrac{k+1}{n}\right)^H\log^2\tfrac{k+1}{n}\right)\\ \times\left(\tfrac{1} {n}+\tfrac{1}{n} \left(\tfrac{k+1}{n}\right)^H \log^2 \left(\tfrac{k+1}{n}\right)  +n^{-H+\alpha}\right)\\
=\zeta^2\left(n^{m-2}
+\frac{2}{n^2}\sum_{k=0}^{n^m-1} \left(\tfrac{k+1}{n}\right)^H \log^2 \left(\tfrac{k+1}{n}\right)+\frac{1}{n^2}\sum_{k=0}^{n^m-1}\left(\tfrac{k+1}{n}\right)^{2H}\log^4\tfrac{k+1}{n}\right.\\
+\left.n^{m-1-H+\alpha}
+n^{-1-H+\alpha}
\sum_{k=0}^{n^m-1}\left(\tfrac{k+1}{n}\right)^H\log^2\tfrac{k+1}{n}\right).
\end{gathered}\end{equation}
Substituting  the bounds from Lemma \ref{l:2.4}  into the right-hand side of \eqref{est5}, we obtain
\begin{gather*}
\abs{\sum_{k=0}^{n^m-1}X_{k,n}
\int_{\frac{k}{n}}^{\frac{k+1}{n}}\left(X_s-X_{k,n}\right)ds}
\le
\zeta^2\Big(n^{m-2}
+n^{(m-1)H-2+m}\log^2 n\\
+n^{2H(m-1)-2+m}\log^4 n+n^{m-1-H+\alpha}
+n^{(m-2)H-1+m+\alpha}
\log^2n\Big).
%\\  \le  \zeta^2n^{2H(m-1)-2+m}\log^4(n+1).
\end{gather*}
We take into account that $\log n=o(n^\alpha)$ as $n\rightarrow\infty$, for any $\alpha>0$.  So, it is necessary to compare exponents $m-2$, $(m-1)H-2+m$,  $2H(m-1)-2+m$, $m-1-H$  and $(m-2)H-1+m$. We get  that exponent $2H(m-1)-2+m$ is the biggest under the condition $m>\frac1H$ while exponent $(m-2)H-1+m$  is the biggest under the condition $m\leq \frac1H$ whence the proof of $(ii)$ follows.
\end{proof}

Now we establish moment bounds for the 2nd term in the numerator of the right-hand side of \eqref{estimator}.
In order to do this, we apply well-known Isserlis' formula for calculation of higher moments of Gaussian distribution: let $\{\chi_1, \chi_2, \chi_3, \chi_4\}$ be a Gaussian vector, then $$ \E(\chi_1\chi_2\chi_3\chi_4)=\E(\chi_1\chi_2)\E(\chi_3\chi_4)+\E(\chi_1\chi_3)\E(\chi_2\chi_4)+\E(\chi_1\chi_4)\E(\chi_2\chi_3).$$
Therefore,  we can calculate mathematical expectations
$\E B^H_s B^H_u \Delta B^H_k \Delta B^H_j$
for $k\ne j$ as
\begin{multline}\label{mom_bound1}
\E B^H_s B^H_u \Delta B^H_k \Delta B^H_j
=\E B^H_u \Delta B^H_k \E B^H_s \Delta B^H_j
+\E B^H_s \Delta B^H_k \E B^H_u \Delta B^H_j\\
+\E B^H_s B^H_u \E \Delta B^H_k\Delta B^H_j
\le
\E B^H_u \Delta B^H_k \E B^H_s \Delta B^H_j
\\+\E B^H_s \Delta B^H_k \E B^H_u \Delta B^H_j
\end{multline}
because for
$H\in\left(0,\frac12\right)$
the increments of fBm $B^H$ are negatively correlated and so
$\E\Delta B^H_k \Delta B^H_j<0$.
Similarly,
\begin{equation}\label{mom_bound2}
\E B^H_s B^H_u\left(\Delta B^H_{k,n}\right)^2
=2\E B^H_u \Delta B^H_{k,n} \E B^H_s \Delta B^H_{k,n}
+n^{-2H}\E B^H_s B^H_u.
\end{equation}
\begin{lemma} $(i)$  Let $\theta>0$.  Then for any $m>1$  we have the following moment bound
\[
\E\left(\sum_{k=0}^{n^m-1}X_{k,n}\Delta B^H_{k,n} \right)^2\le Cn^{2-4H}e^{2\theta n^{m-1}}.
\]

$(ii)$  Let $\theta<0$. Then for any $m>1$  we have the following moment bound
\[
\E\left(\sum_{k=0}^{n^m-1}X_{k,n}\Delta B^H_{k,n} \right)^2\le C n^{2m-4H}.
\]
\end{lemma}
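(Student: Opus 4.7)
The plan is to expand the expectation using Isserlis' formula, exploit the negative correlation of fBm increments valid for $H<\tfrac12$, and bound the resulting discrete sums via Lemma~\ref{l:2.4}.

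First, write $X_{k,n}=x_0e^{\theta k/n}+U_{k,n}$ with $U_{k,n}:=\theta e^{\theta k/n}\int_0^{k/n}e^{-\theta s}B^H_s\,ds+B^H_{k/n}$ a centered Gaussian by~\eqref{solution}, and expand
\[
\E\Bigl(\sum_{k=0}^{n^m-1}X_{k,n}\Delta B^H_{k,n}\Bigr)^{\!2}
=\sum_{k,j}\E\bigl[X_{k,n}X_{j,n}\Delta B^H_{k,n}\Delta B^H_{j,n}\bigr].
\]
The cross terms $x_0e^{\theta k/n}\cdot\E[U_{j,n}\Delta B^H_{k,n}\Delta B^H_{j,n}]$ and their mirror images vanish, being odd products of centered jointly Gaussian variables. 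Applying Isserlis to $\E[U_{k,n}U_{j,n}\Delta B^H_{k,n}\Delta B^H_{j,n}]$ and recombining with the deterministic piece $A_kA_j\E[\Delta B^H_{k,n}\Delta B^H_{j,n}]$ yields the decomposition $T_1+T_2+T_3$ with
\begin{align*}
T_1&=\sum_{k,j}\E[X_{k,n}X_{j,n}]\,\E[\Delta B^H_{k,n}\Delta B^H_{j,n}],\\
T_2&=\Bigl(\sum_{k}\E[X_{k,n}\Delta B^H_{k,n}]\Bigr)^{\!2},\\
T_3&=\sum_{k,j}\E[X_{k,n}\Delta B^H_{j,n}]\,\E[X_{j,n}\Delta B^H_{k,n}].
\end{align*}

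Next I observe that off-diagonal entries of $T_1$ are nonpositive, because $\E[X_{k,n}X_{j,n}]\ge 0$ and $\E[\Delta B^H_{k,n}\Delta B^H_{j,n}]\le 0$ for $k\ne j$, so $T_1\le n^{-2H}\sum_k\E[X_{k,n}^2]$. For $T_2$ and $T_3$ I write $\E[X_{k,n}\Delta B^H_{j,n}]=\theta e^{\theta k/n}\int_0^{k/n}e^{-\theta s}\E[B^H_s\Delta B^H_{j,n}]\,ds+\E[B^H_{k/n}\Delta B^H_{j,n}]$ and estimate the elementary fBm covariances from the formula for $R(t,s)$, exactly as already used in~\eqref{mom_bound1}--\eqref{mom_bound2}. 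For $\theta>0$, combining $\E[X_{k,n}^2]\le Ce^{2\theta k/n}$, $|\E[X_{k,n}\Delta B^H_{k,n}]|\le Ce^{\theta k/n}n^{-2H}$ and the geometric sum $\sum_{k=0}^{n^m-1}e^{\theta k/n}\le Cne^{\theta n^{m-1}}$, the dominant contribution $T_2\le Cn^{2-4H}e^{2\theta n^{m-1}}$ matches the target, while $T_1\le Cn^{1-2H}e^{2\theta n^{m-1}}$ and $T_3$ is of the same or smaller order. For $\theta<0$ the exponentials stay bounded, $\E[X_{k,n}^2]$ is uniformly bounded in~$k$, and $|\E[X_{k,n}\Delta B^H_{k,n}]|$ is of order~$n^{-2H}$, so $T_2\le Cn^{2m-4H}$ dominates the contribution from $T_1\le Cn^{m-2H}$.

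The main technical obstacle will be the bound on $T_3$. The naive Cauchy--Schwarz estimate $|\E[X_{k,n}\Delta B^H_{j,n}]|\le C\sqrt{\E[X_{k,n}^2]}\,n^{-H}$ loses a factor $n^H$ per entry and exceeds the targets in both regimes. Instead one must use the explicit formula for~$R$ to show that $\E[B^H_{k/n}\Delta B^H_{j,n}]$ decays polynomially in $|k-j|$ with exponent $2H-2$, whose series is convergent for $H<\tfrac12$, so that the mass of $T_3$ concentrates on a diagonal strip and can then be summed using Lemma~\ref{l:2.4}.
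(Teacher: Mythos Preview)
Your Isserlis decomposition $T_1+T_2+T_3$ is a different route from the paper, which instead splits $X_{k,n}$ via~\eqref{solution} into three sums $I_n^1+I_n^2+I_n^3$ and, for $I_n^3=\sum_k B^H_{k,n}\Delta B^H_{k,n}$, invokes an external $L^2$-convergence result of Nourdin together with self-similarity. Your approach avoids that citation and is in principle more self-contained.

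There is, however, a real gap in your $T_1$ argument when $\theta<0$. You assert $\E[X_{k,n}X_{j,n}]\ge0$, but this is \emph{false} for $H<\tfrac12$: the stationary fractional Ornstein--Uhlenbeck covariance has leading asymptotic $\tfrac12\theta^{-2}(2H)(2H-1)\tau^{2H-2}<0$ as $\tau\to\infty$ (see \cite{CKM}, Theorem~2.3), and for $\theta<0$ with $k/n,j/n$ large the non-stationary process is close to the stationary one, so $\E[X_{k,n}X_{j,n}]<0$ for many pairs. The off-diagonal terms of $T_1$ therefore need not have the sign you claim. The repair is short: for $\theta<0$ use $|\E[X_{k,n}X_{j,n}]|\le C$ uniformly, together with $\sum_{j}|\E[\Delta B^H_{k,n}\Delta B^H_{j,n}]|\le 2n^{-2H}$ (the off-diagonal terms are all negative and their sum over $j$ equals $\E[\Delta B^H_{k,n}B^H_{n^{m-1}}]-n^{-2H}\ge -n^{-2H}$), which still gives $|T_1|\le Cn^{m-2H}$. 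For $\theta>0$ your positivity claim is fine, since every term in the expansion of $\E[U_sU_t]$ carries a positive sign.

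Your $T_3$ discussion also overshoots. The quantity $\E[B^H_{k/n}\Delta B^H_{j,n}]$ does \emph{not} decay like $|k-j|^{2H-2}$; it contains a piece of order $n^{-2H}j^{2H-1}$ that is independent of $|k-j|$. But no decay is needed: the H\"older bound $|\E[B^H_s\Delta B^H_{j,n}]|\le Cn^{-2H}$ already yields $|\E[X_{k,n}\Delta B^H_{j,n}]|\le Ce^{\theta k/n}n^{-2H}$ for $\theta>0$ and $\le Cn^{-2H}$ for $\theta<0$, so that $|T_3|\le Cn^{-4H}\bigl(\sum_k e^{\theta k/n}\bigr)^2\le Cn^{2-4H}e^{2\theta n^{m-1}}$, respectively $|T_3|\le Cn^{2m-4H}$, directly.
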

\begin{proof} $(i)$
It follows from \eqref{solution} that
\begin{multline*}
\sum_{k=0}^{n^m-1}X_{k,n}\Delta B^H_{k,n}
=x_0\sum_{k=0}^{n^m-1}e^{\theta\frac{k}{n}}\Delta B^H_{k,n}\\
+\theta\sum_{k=0}^{n^m-1}e^{\theta\frac{k}{n}}
\int_0^{\frac{k}{n}}e^{-\theta s}B^H_s\,ds
\cdot\Delta B^H_{k,n}
+\sum_{k=0}^{n^m-1}B^H_{k,n}\Delta B^H_{k,n}
=:I_n^1+I_n^2+I_n^3.
\end{multline*}

Remind that
$\E\Delta B^H_{k,n}\Delta B^H_{j,n}<0$
for $k\ne j$.
Therefore
\begin{multline*}
0\le\E\left(\sum_{k=0}^{n^m-1}e^{\theta\frac{k}{n}}\Delta B^H_{k,n}\right)^2\\
\le n^{-2H}\sum_{k=0}^{n^m-1}e^{2\theta\frac{k}{n}}
\leq  n^{1-2H}\int_0^{n^{m-1}}e^{2\theta s}ds=Cn^{1-2H}e^{2\theta n^{m-1}}.
\end{multline*}
So,
$\E\left(I_n^1\right)^2\le Cn^{1-2H}e^{2\theta n^{m-1}}$, $n\ge1$.

Consider $I_n^3$.
It is well known (see, e.\,g., the relation (1.8) from \cite{Nourdin}) that for
$H\in\left(0,\frac12\right)$
\[
n^{2H-1}\sum_{k=0}^{n-1}B^H_{k,n}\Delta B^H_{k,n}
\xrightarrow{L^2}-\frac12.
\]
Therefore, there exists $C>0$ such that
\[
\E\left(n^{2H-1}\sum_{k=0}^{n-1}B^H_{k,n}\Delta B^H_{k,n}\right)^2\le C,\quad n\ge1.
\]

Now we can use the self-similarity property of $B^H$, namely,
$$\left(B^H_{at},t\ge0\right)\overset{d}{=}
a^H\left(B^H_{t},t\ge0\right)$$
and get
\begin{gather*}
\E\left(I_n^3\right)^2
=\E\left(\sum_{k=0}^{n^m-1}B^H_{k,n}\Delta B^H_{k,n} \right)^2\\
=
n^{4(1-m)H}\E\left(\sum_{k=0}^{n^m-1}
B^H_{\frac{k}{n^m}}
\left(B^H_{\frac{k+1}{ n^m }}
-B^H_{\frac{k}{n^m}}\right)\right)^2\\
\le
n^{4(1-m)H}\cdot n^{2m(1-2H)}
\E\left(n^{m(2H-1)}\sum_{k=0}^{ n^m -1}
B^H_{\frac{k}{ n^m }}
\left(B^H_{\frac{k+1}{ n^m }}
-B^H_{\frac{k}{ n^m }}\right)\right)^2\\
\le
C n^{4H-4m H+2m-4m H}
=C n^{4H+2m-8m H}.
\end{gather*}

At last,
\begin{equation}\label{est61}
\begin{gathered}
0\le\E\left(I_n^2\right)^2
=\theta^2\sum_{j,k=0}^{n^m-1}
e^{\theta\frac{k}{n}+\theta\frac{j}{n}}
\int_0^{\frac{k}{n}}\int_0^{\frac{j}{n}}
e^{-\theta s-\theta u}\,\E B^H_sB^H_u\Delta B^H_{k,n}\Delta B^H_{j,n}\,du\,ds\\
=
\theta^2\sum_{k=0}^{n^m-1}e^{2\theta\frac{k}{n}}
\int_0^{\frac{k}{n}}\int_0^{\frac{k}{n}}
e^{-\theta s-\theta u}\,\E B^H_sB^H_u \left(\Delta B^H_{k,n}\right)^2du\,ds\\
+
\theta^2\sum_{k\ne j}e^{\theta\frac{k}{n}+\theta\frac{j}{n}}
\int_0^{\frac{k}{n}}\int_0^{\frac{j}{n}}
e^{-\theta s-\theta u}\,\E B^H_sB^H_u\Delta B^H_{k,n}\Delta B^H_{j,n}\,du\,ds
=:J_1^n+J_2^n.
\end{gathered}
\end{equation}
We get from  \eqref{mom_bound1} and \eqref{mom_bound2} that
\begin{equation}\begin{gathered}\label{incr1}
\E B^H_sB^H_u \left(\Delta B^H_{k,n}\right)^2
=\frac12\left(\left(\tfrac{k+1}{n}\right)^{2H}-\abs{u-\tfrac{k+1}{n}}^{2H}
-\left(\tfrac{k}{n}\right)^{2H}+\abs{u-\tfrac{k}{n}}^{2H}\right)\\
\times \left(\left(\tfrac{k+1}{n}\right)^{2H}-\abs{s-\tfrac{k+1}{n}}^{2H}
-\left(\tfrac{k}{n}\right)^{2H}+\abs{s-\tfrac{k}{n}}^{2H}\right)+n^{-2H}\E B^H_s B^H_u\\ \leq C n^{-4H}+C n^{-2H}\left(s^{2H}+u^{2H}-|s-u|^{2H}\right),
\end{gathered}\end{equation}
and
\begin{equation}\begin{gathered}\label{incr2}
\E B^H_sB^H_u \Delta B^H_{k} \Delta B^H_{j}
\le\frac12\left(\left(\tfrac{k+1}{n}\right)^{2H}-\left(\tfrac{k}{n}\right)^{2H}
-\abs{u-\tfrac{k+1}{n}}^{2H}
+\abs{u-\tfrac{k}{n}}^{2H}\right)\\
\times \left(\left(\tfrac{k+1}{n}\right)^{2H}-\left(\tfrac{k}{n}\right)^{2H}
-\abs{s-\tfrac{k+1}{n}}^{2H}
+\abs{s-\tfrac{k}{n}}^{2H}\right)\leq C n^{-4H}.
\end{gathered}\end{equation}
Substituting the above bounds into \eqref{est61}, we get that
\[
J_1^n\le Cn^{1-2H}e^{2\theta n^{m-1}}
\quad\text{and}\quad
J_2^n\le Cn^{2-4H}e^{2\theta n^{m-1}}.
\]
Evidently, for $m>1$ the biggest contribution is from the bound  $Cn^{2-4H}e^{2\theta n^{m-1}}$ whence the proof follows.

$(ii)$ Let $\theta<0$. In this case
\begin{multline*}
0\le\E\left(\sum_{k=0}^{n^m-1}e^{\theta\frac{k}{n}}\Delta B^H_{k,n}\right)^2
\le\sum_{k=0}^{n^m-1}e^{2\theta\frac{k}{n}}
\E\left(\Delta B^H_{k,n}\right)^2\\
\le\sum_{k=0}^{n^m-1}\E\left(\Delta B^H_{k,n}\right)^2
=n^m n^{-2H}
=n^{m-2H}.
\end{multline*}
So,
$\E\left(I_n^1\right)^2\le Cn^{m-2H}$, $n\ge1$.

The term  $I_n^3$ is estimated as before, and
\begin{equation}\label{est6}
\begin{gathered}
0\le\E\left(I_n^2\right)^2
=\theta^2\sum_{j,k=0}^{n^m-1}
e^{\theta\frac{k}{n}+\theta\frac{j}{n}}
\int_0^{\frac{k}{n}}\int_0^{\frac{j}{n}}
e^{-\theta s-\theta u}\,\E B^H_sB^H_u\Delta B^H_{k,n}\Delta B^H_{j,n}\,du\,ds\\
=
\theta^2\sum_{k=0}^{n^m-1}e^{2\theta\frac{k}{n}}
\int_0^{\frac{k}{n}}\int_0^{\frac{k}{n}}
e^{-\theta s-\theta u}\,\E B^H_sB^H_u \left(\Delta B^H_{k,n}\right)^2du\,ds\\
+
\theta^2\sum_{k\ne j}e^{\theta\frac{k}{n}+\theta\frac{j}{n}}
\int_0^{\frac{k}{n}}\int_0^{\frac{j}{n}}
e^{-\theta s-\theta u}\,\E B^H_sB^H_u\Delta B^H_{k,n}\Delta B^H_{j,n}\,du\,ds
=:J_1^n+J_2^n.
\end{gathered}
\end{equation}

Substituting   bounds \eqref{incr1} and \eqref{incr2} into \eqref{est6}, we get that
\begin{gather*}
J_1^n\leq Cn^{-4H}\sum_{k=0}^{n^m-1}e^{2\theta\frac{k}{n}}
\int_0^{\frac{k}{n}}\int_0^{\frac{k}{n}}
e^{-\theta s-\theta u}ds\,du\\+Cn^{-2H}\sum_{k=0}^{n^m-1}e^{2\theta\frac{k}{n}}
\int_0^{\frac{k}{n}}\int_0^{\frac{k}{n}}
e^{-\theta s-\theta u}\left(s^{2H}+u^{2H}-|s-u|^{2H}\right)ds\,du
\\
\leq Cn^{2m-4H}+Cn^{-2H}\sum_{k=0}^{n^m-1}e^{\theta\frac{k}{n}}
\int_0^{\frac{k}{n}}
e^{-\theta s} s^{2H} ds\\
\leq Cn^{2m-4H}+Cn^{1-2H}\int_0^{n^{m-1}}e^{\theta s}\int_0^{s}
e^{-\theta u} u^{2H} du\,ds\\
\leq Cn^{2m-4H}+Cn^{1-2H}e^{\theta n^{m-1}}\int_0^{n^{m-1}}e^{-\theta u}u^{2H}du+Cn^{1-2H}\int_0^{n^{m-1}}u^{2H}du\\ \leq Cn^{2m-4H}+Cn^{1-2H}\int_0^{n^{m-1}}u^{2H}du\\ \leq Cn^{2m-4H}+Cn^{2Hm+m-4H}\leq Cn^{2m-4H},
\quad\text{and}\quad
J_2^n\le Cn^{2m-4H}.
\end{gather*}
Comparing exponents
$2m-4H$, $4H+2m-8Hm$ and $m-2H$,
we get that for $m>1$
$2m-4H$ is the biggest one, whence the proof follows.
\end{proof}

\begin{corollary}\label{corol2}
$(i)$ Let $\theta>0$. Then \[
\E\left(n^{4H-2}e^{-2\theta n^{m-1}}\sum_{k=0}^{n^m-1}X_{k,n}\Delta B^H_{k,n} \right)^2\le C.
\]
If we denote $\xi_n=n^{2H-1}e^{- \theta n^{m-1}}\sum_{k=0}^{n^m-1}X_{k,n}\Delta B^H_{k,n}  $ then $\sup_{n\ge1}\E\xi_n^2<\infty$. It means that for any $m>1$ the numerator of \eqref{estimator} can be bounded by the sum
\[\zeta^2 {n}^{-1} e^{2\theta n^{m-1}} +n^{1-2H}e^{ \theta n^{m-1}}\xi_n,
\]
where $\sup_{n\ge1}\E\xi_n^2<\infty$.

$(ii)$ Let $\theta<0$. Then we have two cases.
\begin{itemize} \item[$(a)$]
 Let  $1<m\leq \frac1H$. Then for any $\alpha>0$ the numerator of \eqref{estimator} can be bounded by the sum
\[
\zeta^2n^{(m-2)H+m-1+\alpha}+n^{m-2H}\xi_n,
\]
where $\sup_{n\ge1}\E\xi_n^2<\infty$.
 \item[$(b)$] Let  $m>\frac1H$. Then for any $\alpha>0$ the numerator of \eqref{estimator} can be bounded by the sum
\[
\zeta^2n^{(2H+1)m-2H-2+\alpha}+n^{m-2H}\xi_n,
\]
where $\sup_{n\ge1}\E\xi_n^2<\infty$.
\end{itemize}
\end{corollary}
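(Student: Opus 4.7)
The corollary is essentially a repackaging of the two preceding lemmas into a convenient form for the proof of the main consistency results. The plan is therefore to simply assemble the previously obtained bounds, defining the normalizing factor for the stochastic part so as to exhibit a uniformly $L^2$-bounded sequence $\xi_n$.

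For part $(i)$, I start from the second lemma, which gives $\E\bigl(\sum_{k=0}^{n^m-1}X_{k,n}\Delta B^H_{k,n}\bigr)^2\le C n^{2-4H}e^{2\theta n^{m-1}}$ when $\theta>0$. The natural normalization that makes the ratio order one is $n^{2H-1}e^{-\theta n^{m-1}}$, so I set
\[
\xi_n:=n^{2H-1}e^{-\theta n^{m-1}}\sum_{k=0}^{n^m-1}X_{k,n}\Delta B^H_{k,n},
\]
and a direct computation gives $\E\xi_n^2\le n^{4H-2}e^{-2\theta n^{m-1}}\cdot C n^{2-4H}e^{2\theta n^{m-1}}=C$, hence $\sup_{n\ge1}\E\xi_n^2<\infty$. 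The stochastic contribution to the numerator of \eqref{estimator} is then $n^{1-2H}e^{\theta n^{m-1}}\xi_n$. The deterministic-type contribution, the drift term $\theta\sum X_{k,n}\int_{k/n}^{(k+1)/n}(X_s-X_{k,n})\,ds$, is controlled by Lemma~\ref{lem3.1}$(i)$ by $\zeta^2 n^{-1}e^{2\theta n^{m-1}}$ (absorbing $\theta$ into the constant and into the random factor $\zeta\in\mathfrak Z$). Adding the two bounds yields the claim.

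For part $(ii)$, I proceed identically, now using the moment bound $\E\bigl(\sum_{k=0}^{n^m-1}X_{k,n}\Delta B^H_{k,n}\bigr)^2\le C n^{2m-4H}$ valid for $\theta<0$. The natural normalization is $n^{2H-m}$, so I set
\[
\xi_n:=n^{2H-m}\sum_{k=0}^{n^m-1}X_{k,n}\Delta B^H_{k,n},
\]
which again satisfies $\sup_{n\ge1}\E\xi_n^2<\infty$ and produces the stochastic summand $n^{m-2H}\xi_n$. For the first summand, I simply invoke Lemma~\ref{lem3.1}$(ii)(a)$ in the regime $1<m\le 1/H$, noting that $mH+m-2H-1=(m-2)H+m-1$, and Lemma~\ref{lem3.1}$(ii)(b)$ in the regime $m>1/H$, noting that $2Hm+m-2H-2=(2H+1)m-2H-2$. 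This gives exactly the two bounds displayed in $(a)$ and $(b)$.

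There is no genuine obstacle here; the only thing that requires some care is the bookkeeping of exponents (in particular the algebraic identities $mH+m-2H-1=(m-2)H+m-1$ and $2Hm+m-2H-2=(2H+1)m-2H-2$), and the fact that the auxiliary factor $\theta$ in front of the drift term of the numerator can be absorbed into the generic constant $C$ hidden inside the random variable $\zeta\in\mathfrak Z$, since $\mathfrak Z$ is stable under multiplication by a constant.
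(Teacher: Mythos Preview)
Your proposal is correct and follows exactly the approach implicit in the paper: the corollary has no separate proof there because it is obtained by directly combining Lemma~\ref{lem3.1} with the moment bound lemma, defining $\xi_n$ by the natural normalization and rewriting the exponents as you did.
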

\section{How to deal with the denominator}\label{section4}

Now our goal is to present the denominator of \eqref{estimator} in more convenient form.
At first we compare the sum
$\frac1n\sum_{k=0}^{n^m-1}X_{k,n}^2$
to the corresponding integral \linebreak
$\int_0^{n^{m-1}}X_s^2\,ds$. The reason to replace the sum with the corresponding integral is that for some values of $H$ and $m$ we can prove the consistency with the help of some kind of  L'H\^{o}pital's rule, however,   the application of L'H\^{o}pital's rule    or   Stolz–-Ces\`{a}ro theorem to the sum  $\sum_{k=0}^{n^m-1}X_{k,n}^2$  is problematic because not only the upper bound but the terms in the sum depend on $n$.

\begin{lemma}\label{l:2.8} $(i)$  Let $\theta>0$. Then there exists such  $\zeta_1\in \mathfrak{Z}$ that
\[
\abs{\int_0^{n^{m-1}}X_s^2\,ds
-\frac1n\sum_{k=0}^{n^m-1}X_{k,n}^2}
\leq  \frac{\zeta_1^2}{n} e^{2\theta n^{m-1}}.
\]

$(ii)$ Let $\theta<0$. Then we have two cases.
\begin{itemize}
\item [$(a)$] Let  $1<m\leq \frac1H$. Then there exists such  $\zeta_1\in \mathfrak{Z}$ that for any $\beta>0$
  we have the following   bound

\[
\abs{\int_0^{n^{m-1}}X_s^2\,ds
-\frac1n\sum_{k=0}^{n^m-1}X_{k,n}^2}
\leq  \zeta_1^2 n^{ mH+m-2H-1+\beta}.
\]
\item [$(b)$] Let  $m> \frac1H$. Then there exists such  $\zeta_1\in \mathfrak{Z}$ that for any $\beta>0$
  we have the following   bound

\[
\abs{\int_0^{n^{m-1}}X_s^2\,ds
-\frac1n\sum_{k=0}^{n^m-1}X_{k,n}^2}
\leq  \zeta_1^2 n^{ 2mH+m-2H-2+\beta}.
\]
\end{itemize}
\end{lemma}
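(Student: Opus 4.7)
\textbf{Plan for the proof of Lemma \ref{l:2.8}.}

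The starting point is the elementary identity
\[
\int_0^{n^{m-1}}X_s^2\,ds-\frac1n\sum_{k=0}^{n^m-1}X_{k,n}^2
=\sum_{k=0}^{n^m-1}\int_{k/n}^{(k+1)/n}\bigl(X_s^2-X_{k,n}^2\bigr)\,ds,
\]
together with the factorization $X_s^2-X_{k,n}^2=(X_s-X_{k,n})(X_s+X_{k,n})$. Bounding $|X_s+X_{k,n}|\le 2\sup_{0\le u\le (k+1)/n}|X_u|$ reduces the problem to estimating
\[
\sum_{k=0}^{n^m-1}\int_{k/n}^{(k+1)/n}\sup_{0\le u\le (k+1)/n}|X_u|\cdot\sup_{k/n\le u\le s}|X_u-X_{k,n}|\,ds,
\]
which has exactly the same structure as the quantity estimated in Lemma \ref{lem3.1}, except that the role of $|X_{k,n}|$ is now played by $2\sup_{0\le u\le(k+1)/n}|X_u|$. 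Since the bound \eqref{est311} (and its analogue for $\theta<0$) is already an estimate of that supremum, the subsequent computation carries over verbatim.

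Concretely, for $\theta>0$ I would plug \eqref{est31} into the first factor and \eqref{est41} into the second, arriving at the same double sum bounded in \eqref{est42}; the dominant contribution $\frac{1}{n}e^{2\theta n^{m-1}}$ then yields part $(i)$. For $\theta<0$ I would use \eqref{est3} and \eqref{est4}, reproducing the sum in \eqref{est5} and again applying Lemma \ref{l:2.4}; the comparison of exponents performed at the end of the proof of Lemma \ref{lem3.1}$(ii)$ then gives the threshold $m=1/H$ separating cases $(a)$ and $(b)$. In all three cases the parameter $\beta>0$ absorbs the logarithmic factors via $\log n=o(n^\beta)$, and the random variable $\zeta_1\in\mathfrak Z$ is built from $\xi(2,H)$ and $\eta(1,H)$ exactly as $\zeta$ was in the previous section.

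The only point that requires a brief independent check is that replacing the factor $X_{k,n}$ by $\sup_{0\le u\le (k+1)/n}|X_u|$ does not alter any exponent: in all three regimes the upper bound \eqref{est311} (respectively its $\theta<0$ analogue) for the supremum has the same form and the same dominant term as the pointwise bound for $|X_{k,n}|$, so the computations in \eqref{est42} and \eqref{est5} go through unchanged. No new technical difficulty appears; the main obstacle, as in Lemma \ref{lem3.1}, is merely the bookkeeping comparison of the exponents $m-2$, $(m-1)H-2+m$, $2H(m-1)-2+m$, $m-1-H$ and $(m-2)H-1+m$, which identifies $(m-2)H+m-1$ as dominant for $1<m\le 1/H$ and $2H(m-1)+m-2$ as dominant for $m>1/H$.
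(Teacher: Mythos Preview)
Your proposal is correct and follows essentially the same route as the paper: the authors also start from the identity $\int_0^{n^{m-1}}X_s^2\,ds-\tfrac1n\sum X_{k,n}^2=\sum_k\int_{k/n}^{(k+1)/n}(X_s^2-X_{k,n}^2)\,ds$, factor the integrand as $|X_s-X_{k,n}|\cdot(|X_s|+|X_{k,n}|)\le 2|X_s-X_{k,n}|\sup_{0\le u\le s}|X_u|$, and then insert the bounds \eqref{est31}--\eqref{est41} (respectively \eqref{est3}--\eqref{est4}) to obtain exactly the sums you describe, with the same exponent comparison giving the threshold $m=1/H$. Your observation that the computation is identical to that of Lemma~\ref{lem3.1} is precisely the point; the paper simply rewrites the relevant sums (their \eqref{est12} and \eqref{est7}) rather than invoking \eqref{est42} and \eqref{est5} by reference.
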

\begin{proof}
Evidently, the difference between the integral and the corresponding integral sum can be bounded  as
\[
\abs{\int_0^{n^{m-1}}X_s^2\,ds
-\frac1n\sum_{k=0}^{n^m-1}X_{k,n}^2}
\le\int_0^{n^{m-1}}|\varphi_n(s)|\,ds,
\]
where the integrand has the form
\[
\varphi_n(s)=\left(X_s^2-X_{k,n}^2\right)
\ind_{s\in\left[\frac{k}{n},\frac{k+1}{n}\right)},
\quad 0\le k\le n^m-1.
\]
Furthermore,  the integrand can be bounded as

\begin{gather*}
\abs{\varphi_n(s)}
\le\abs{X_s-X_{k,n}}\left(\abs{X_s}+\abs{X_{k,n}}\right)
\ind_{s\in\left[\frac{k}{n},\frac{k+1}{n}\right)}\\ \le 2\abs{X_s-X_{k,n}}
\sup_{0\leq u\leq s}\abs{X_u}\ind_{s\in\left[\frac{k}{n},\frac{k+1}{n}\right)}.\end{gather*}
$(i)$ Let $\theta>0$. Then from \eqref{est31}, \eqref{est41} and similarly  to \eqref{est42},
\begin{equation}\begin{gathered}\label{est12}\abs{X_s-X_{k,n}}
\sup_{0\leq u\leq s}\abs{X_u}\ind_{s\in\left[\frac{k}{n},\frac{k+1}{n}\right)}\le \Big(n^{-1}e^{2\theta s}+2n^{-1}e^{\theta s}s^H\log^2s\\+e^{\theta s}n^{-H+\alpha}+n^{-1}s^{2H}\log^4s+n^{-H+r}s^H\log^2s\Big)\zeta_1^2.\end{gathered}\end{equation}
Integrating over $[0, n^{m-1}]$, we see that the integral of the first term in the right-hand side of \eqref{est12} dominates, whence the proof follows.

$(ii)$ Let $\theta<0$. Then according to \eqref{est3}--\eqref{est4}, \begin{gather*}\abs{X_s-X_{k,n}}
\sup_{0\leq u\leq s}\abs{X_u}\ind_{s\in\left[\frac{k}{n},\frac{k+1}{n}\right)}\le
\Big(\tfrac{1} {n}+\tfrac{1}{n} \left(\tfrac{k+1}{n}\right)^H \log^2 \left(\tfrac{k+1}{n}\right)  \\
+n^{-H+\alpha}\Big)
\left(1+\left(\tfrac{k+1}{n}\right)^H
\log^2\left(\tfrac{k+1}{n}\right)\right)\zeta_1^2,
\end{gather*}
therefore
\begin{equation}\label{est7}
\begin{gathered}
\int_0^{n^{m-1}}|\varphi_n(s)|\,ds
\le\left(n^{m-2}+\frac{2}{n^2}\sum_{k=0}^{n^m-1}
\left(\tfrac{k+1}{n}\right)^H\log^2\tfrac{k+1}{n}
\right.\\+\frac{1}{n^2}\sum_{k=0}^{n^m-1}
\left(\tfrac{k+1}{n}\right)^{2H}\log^4\tfrac{k+1}{n}+n^{m-H-1+\alpha}
\\+\left.n^{-H-1+\alpha}\sum_{k=0}^{n^m-1}
\left(\tfrac{k+1}{n}\right)^H\log^2\tfrac{k+1}{n}\right)
\zeta_1^2.
\end{gathered}
\end{equation}
To get rid of logarithms, we apply  Lemma~\ref{l:2.4} to \eqref{est7} and obtain  that for any $\beta>0$
\begin{multline*}
\int_0^{n^{m-1}}|\varphi_n(s)|\,ds
\le\left(n^{m-2}+n^{mH+m-H-2+\beta}
+n^{m-H-1+\beta}\right.\\
+\left.n^{2Hm+m-2H-2+\beta}
+n^{mH+m-2H-1+\beta}\right)
\zeta_1^2.
\end{multline*}
Comparing the exponents $m-2$, $mH+m-H-2$, $m-H-1$,  $2Hm+m-2H-2$ and $mH+m-2H-1$,
we deduce that for $1<m\leq\frac1H$ the biggest exponent equals $mH+m-2H-1$, and for
$m>\frac1H$
the biggest exponent equals $2Hm+m-2H-2$,
 whence the proof follows.
\end{proof}

\begin{corollary}\label{corol3}
 $(i)$  Let $\theta>0$. Then there exists such  $\zeta_1\in \mathfrak{Z}$ that
\[\frac1n\sum_{k=0}^{n^m-1}X_{k,n}^2=\int_0^{n^{m-1}}X_s^2\,ds+\vartheta_n,\]
where \[\abs{\vartheta_n}
\leq  \frac{\zeta_1^2}{n} e^{2\theta n^{m-1}}.
\]

$(ii)$ Let $\theta<0$. Then we have two cases.
\begin{itemize}
\item [$(a)$] Let  $1<m\leq \frac1H$. Then there exists such  $\zeta_1\in \mathfrak{Z}$ that for any $\beta>0$

\begin{equation}\label{repr1}
\frac1n\sum_{k=0}^{n^m-1}X_{k,n}^2=\int_0^{n^{m-1}}X_s^2\,ds+\vartheta_n(\beta),\end{equation}
where \begin{equation}\label{est40}
\abs{\vartheta_n(\beta)}
\leq   \zeta_1^2 n^{ mH+m-2H-1+\beta}.\end{equation}

\item [$(b)$] Let  $m> \frac1H$. Then for any $\beta>0$
 representation \eqref{repr1} holds with
 \[\abs{\vartheta_n(\beta)}
\leq   \zeta_1^2 n^{ 2mH+m-2H-2+\beta}.
\]
\end{itemize}

\end{corollary}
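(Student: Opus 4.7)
The corollary is a direct reformulation of Lemma \ref{l:2.8}, so the plan is essentially to rewrite the bound given there as an equality plus a controlled remainder. Concretely, I would define
\[
\vartheta_n := \frac{1}{n}\sum_{k=0}^{n^m-1}X_{k,n}^2 - \int_0^{n^{m-1}}X_s^2\,ds,
\]
(and $\vartheta_n(\beta)$ in case $(ii)$, since the bound there depends on the free parameter $\beta>0$). Then the representation \eqref{repr1} is automatic, and the bound on $|\vartheta_n|$ is precisely the conclusion of Lemma \ref{l:2.8} in each of the three regimes: $\theta>0$, $\theta<0$ with $1<m\le 1/H$, and $\theta<0$ with $m>1/H$.

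The only thing worth checking is that the random variable $\zeta_1$ produced by Lemma \ref{l:2.8} indeed belongs to the class $\mathfrak{Z}$ introduced in Section~\ref{section2}; but this is visible from the proof of Lemma \ref{l:2.8}, since $\zeta_1$ there is a fixed polynomial combination of $\xi(2,H)$ and $\eta(1,H)$, both of which lie in $\mathfrak{Z}$ (the class is closed under the operations used: addition of a positive constant and multiplication by a constant, and the square $\zeta_1^2$ appears only inside the bound, not inside the exponential moment).

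There is no genuine obstacle: the corollary is a bookkeeping step whose role is to put the denominator $\frac{1}{n}\sum_{k=0}^{n^m-1}X_{k,n}^2$ into a form that can be treated by L'H\^{o}pital's rule applied to $\int_0^{n^{m-1}}X_s^2\,ds$, as announced at the beginning of Section~\ref{section4}. All of the work has been done in Lemma \ref{l:2.8}; the corollary merely records the result in the shape that will be used in the consistency theorems of Section~\ref{section5}.
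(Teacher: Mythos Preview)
Your proposal is correct and matches the paper's approach exactly: the paper provides no separate proof for this corollary, as it is an immediate restatement of Lemma~\ref{l:2.8} with $\vartheta_n$ (respectively $\vartheta_n(\beta)$) defined as the difference between the Riemann sum and the integral. Your remark about $\zeta_1\in\mathfrak{Z}$ is superfluous, since Lemma~\ref{l:2.8} already asserts this as part of its statement, but this does not affect correctness.
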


{\section{Main consistency results}\label{section5} }

Consider separately cases $\theta>0$ and $\theta<0$. The case $\theta>0$ is more simple and additionally the estimator is strongly consistent. The case $\theta<0$ needs some additional calculations and we prove only that the estimator is consistent.
\begin{theorem}\label{main1} Let $\theta>0$. Then for any $m>1$ estimator  $\widehat{\theta}_n(m)$ is strongly consistent.\end{theorem}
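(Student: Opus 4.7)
The plan is to start from the representation \eqref{estimator}, which writes $\widehat{\theta}_n(m)-\theta = N_n/D_n$ with numerator $N_n$ and denominator $D_n = \frac{1}{n}\sum_{k=0}^{n^m-1}X_{k,n}^2$. I would show that almost surely $D_n$ is of exact exponential order $e^{2\theta n^{m-1}}$ while $N_n$ grows strictly slower, forcing the ratio to $0$.

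\textbf{Denominator.} Corollary \ref{corol3}(i) reduces $D_n$ to $\int_0^{n^{m-1}} X_s^2\,ds$ up to an additive error bounded by $\zeta_1^2 n^{-1} e^{2\theta n^{m-1}}$. Using the explicit solution \eqref{solution}, I would write $X_t = e^{\theta t} Y_t + B^H_t$, where $Y_t := x_0 + \theta \int_0^t e^{-\theta s} B^H_s \, ds$. The bound \eqref{lim1} ensures that $Y_t$ has an almost sure limit $Y_\infty$, a non-degenerate Gaussian random variable, hence $\P(Y_\infty = 0) = 0$. Together with $e^{-\theta t} B^H_t \to 0$ a.s.\ (a consequence of \eqref{est_KMM}), this yields $e^{-\theta t} X_t \to Y_\infty$ a.s., and L'H\^opital's rule gives
\[
\frac{1}{e^{2\theta T}} \int_0^T X_s^2\, ds \;\longrightarrow\; \frac{Y_\infty^2}{2\theta} > 0 \quad \text{a.s.},\qquad T\to\infty.
\]
Combining this with the Corollary \ref{corol3}(i) error (which is negligible after dividing by $e^{2\theta n^{m-1}}$), I obtain $e^{-2\theta n^{m-1}} D_n \to Y_\infty^2/(2\theta)$ a.s.

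\textbf{Numerator.} Corollary \ref{corol2}(i) already packages the bound $|N_n| \le \zeta^2 n^{-1} e^{2\theta n^{m-1}} + n^{1-2H} e^{\theta n^{m-1}} |\xi_n|$ with $\sup_n \E \xi_n^2 < \infty$. Dividing by $e^{2\theta n^{m-1}}$, the first summand contributes $\zeta^2/n \to 0$ a.s. For the second, Chebyshev's inequality yields
\[
\P\!\left( n^{1-2H} e^{-\theta n^{m-1}} |\xi_n| > \eps \right) \le \frac{C\, n^{2-4H} e^{-2\theta n^{m-1}}}{\eps^2},
\]
and for every $m > 1$ the series $\sum_n n^{2-4H} e^{-2\theta n^{m-1}}$ converges since the exponential decay dominates the polynomial factor. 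Borel--Cantelli then gives $n^{1-2H} e^{-\theta n^{m-1}} |\xi_n| \to 0$ a.s.

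Putting the two estimates together yields $N_n/D_n \to 0$ a.s., i.e.\ strong consistency. The one point requiring care is the a.s.\ non-vanishing of $Y_\infty$: without it the denominator limit could degenerate and the ratio would be undefined. This is handled by the Gaussian (hence absolutely continuous) law of $Y_\infty$, and is precisely what distinguishes the positive-$\theta$ case from the negative-$\theta$ regime treated separately in Theorem \ref{main2}. Beyond this observation, the theorem is essentially an assembly of the sharp bounds already prepared in Sections \ref{section3}--\ref{section4} together with a routine Borel--Cantelli step.
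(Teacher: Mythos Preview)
Your proposal is correct and follows essentially the same route as the paper: both arguments invoke Corollaries~\ref{corol2}(i) and~\ref{corol3}(i), rescale by $e^{2\theta n^{m-1}}$, handle the $\xi_n$ term via Chebyshev and Borel--Cantelli, and apply L'H\^{o}pital to identify the a.s.\ limit $\displaystyle e^{-2\theta T}\int_0^T X_s^2\,ds\to (2\theta)^{-1}Y_\infty^2$ with $Y_\infty$ a non-degenerate Gaussian. Your presentation is in fact slightly more explicit about why $e^{-\theta t}X_t\to Y_\infty$ (separating the $e^{\theta t}Y_t$ and $B^H_t$ contributions), but the logical structure is identical to the paper's.
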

\begin{proof} According to Corollaries \ref{corol2} and \ref{corol3},  it is sufficient to prove that $$\psi_n:=\frac{\zeta^2 {n}^{-1} e^{2\theta n^{m-1}} +n^{1-2H}e^{ \theta n^{m-1}}\xi_n}{\int_0^{n^{m-1}}X_s^2\,ds+\vartheta_n}\rightarrow  0
$$
a.s. as $n\to\infty$, where $\sup_{n\ge1}\E\xi_n^2<\infty$ and $\abs{\vartheta_n}
\leq  \frac{\zeta_1^2}{n} e^{2\theta n^{m-1}}.$ Rewrite $\psi_n$ as
$$\psi_n:=\frac{\zeta^2 {n}^{-1} +n^{1-2H}e^{-\theta n^{m-1}}\xi_n}{e^{-2\theta n^{m-1}}\int_0^{n^{m-1}}X_s^2\,ds+e^{-2\theta n^{m-1}}\vartheta_n}.$$
Evidently, $\zeta^2 {n}^{-1}\rightarrow  0$ a.s. as $n\to\infty$. Furthermore, for any $x>0$ $$\P\left\{n^{1-2H}e^{-\theta n^{m-1}}\xi_n>x\right\}\leq \frac{\E\xi_n^2}{x^2n^{4H-2}e^{2\theta n^{m-1}}}\leq \frac{C}{x^2 n^{4H-2}e^{2\theta n^{m-1}}},$$
and the series $\sum_{n=1}^{\infty}\frac{1}{n^{4H-2}e^{2\theta n^{m-1}}}$ converges. It means by Borel--Cantelli lemma that $n^{1-2H}e^{-\theta n^{m-1}}\xi_n\to 0$ a.s. as $n\to\infty$. Evidently, $e^{-2\theta n^{m-1}}|\vartheta_n|\leq \frac{\zeta^2_1}{n}\to 0$ a.s. as $n\to\infty$. At last, according to \eqref{solution} and the L'H\^{o}pital's rule, $$\lim_{T\to\infty}\frac{\int_0^{T}X_s^2\,ds}{e^{2\theta T}}=\lim_{T\to\infty}\frac{X_T^2}{2\theta e^{2\theta T}}=(2\theta)^{-1}\left(x_0+\int_0^\infty e^{-\theta s}B_s^H\,ds\right)^2, $$
and the limit random variable is positive a.s. as the square of Gaussian variable, whence  the proof follows.
\end{proof}

In the case when $\theta<0$ we can establish consistency, but not the strong consistency  of \eqref{estimator} as $n\to\infty$. According to Corollaries \ref{corol2} and \ref{corol3}, we need to bound the following random variables: for
$1<m\leq \frac1H$ \[
K_n^1(m, \alpha,\beta):=\frac{n^{mH+m-2H-1+\alpha}}
{\int_0^{n^{m-1}}X_s^2\,ds+\vartheta_n}
\quad\text{and}\quad
K_n^2:=\frac{n^{m-2H}\xi_n}{\frac1n\sum_{k=0}^{n^m-1}X_{k,n}^2}.
\]
and for $m>\frac1H$ \[
\widetilde{K}_n^1(m, \alpha,\beta):=\frac{n^{2Hm+m-2H-2+\alpha}}
{\int_0^{n^{m-1}}X_s^2\,ds+\vartheta_n}
\] and the same  $K_n^2$.

\begin{lemma}\label{lemma est1}  Let $\theta<0$.  \begin{itemize}

\item [$(i)$] For any $1<m<\frac{2H+1}{H+1}<\frac1H$ there exist such $\alpha>0$ and $\beta>0$ that
$$K_n^1(m, \alpha,\beta)\to0$$ a.s.\ as $n\to\infty$.

\item [$(ii)$] For any $\frac{2H+1}{H+1}\leq m\leq\frac1H$ there exist such $\alpha>0$ and $\beta>0$ that
$$K_n^1(m, \alpha,\beta)\to0$$ in probability \ as $n\to\infty$.
\item [$(iii)$] There exist such $\alpha>0$ and $\beta>0$ that for any $ m>\frac1H$
$$\widetilde{K}_n^1(m, \alpha,\beta)\to0$$ in probability \ as $n\to\infty$.
\end{itemize}
\end{lemma}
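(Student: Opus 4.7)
The backbone of the argument will be the ergodicity of $X_t$ for $\theta<0$: the process converges in distribution to a centered stationary Gaussian with variance $V:=H\Gamma(2H)|\theta|^{-2H}>0$, and the continuous-time ergodic theorem gives $T^{-1}\int_0^T X_s^2\,ds\to V$ almost surely as $T\to\infty$. Setting $T=n^{m-1}$, this provides the a.s.\ (and in-probability) lower bound
\[
D_n:=\int_0^{n^{m-1}}X_s^2\,ds+\vartheta_n=\tfrac1n\sum_{k=0}^{n^m-1}X_{k,n}^2\ \ge\ \tfrac{V}{2}\,n^{m-1}
\]
for all $n$ large. All three cases will then reduce to comparing the explicit deterministic numerator exponent against $n^{m-1}$ and to controlling the correction $\vartheta_n$ via Corollary~\ref{corol3}.

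For part (i) the inequality $m<(2H+1)/(H+1)$ is exactly $mH+m-2H-1<0$, so one can choose $\alpha>0$ making the numerator tend to zero deterministically. Since $(2H+1)/(H+1)<2$, one may simultaneously choose $\beta\in(0,H(2-m))$, and then $|\vartheta_n|\le\zeta_1^2\,n^{mH+m-2H-1+\beta}=o(n^{m-1})$ almost surely (using that $\zeta_1$ is a.s.\ finite as a member of $\mathfrak{Z}$). The denominator $D_n$ therefore stays of order $n^{m-1}$ a.s., and $K_n^1(m,\alpha,\beta)\to 0$ almost surely.

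For parts (ii) and (iii) the numerator no longer vanishes on its own, so the denominator must strictly dominate. The a.s.\ control of $\zeta_1$ is replaced by a probabilistic one: by $\E\exp(c\zeta_1^2)<\infty$ and Markov's inequality, $\P(\zeta_1^2>n^\delta)=o(1)$ for every $\delta>0$. For each fixed $m$ in the specified range, I would choose $\alpha,\beta,\delta>0$ so that the numerator exponent increased by $\delta$ is strictly below $m-1$, that is, $\alpha,\beta,\delta<H(2-m)$ in (ii) and $\alpha,\beta,\delta<1-2H(m-1)$ in (iii). On the event $\{\zeta_1^2\le n^\delta\}$, Corollary~\ref{corol3} gives $|\vartheta_n|=o(n^{m-1})$; combined with the in-probability ergodic lower bound, $D_n\ge (V/3)n^{m-1}$ on an event of probability $1-o(1)$, and the ratio is dominated there by a deterministic null sequence, giving convergence in probability.

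The main obstacle is the feasibility of strictly positive $\alpha,\beta,\delta$ across the full stated ranges in (ii) and (iii): the slacks $H(2-m)$ and $1-2H(m-1)$ degenerate at $m=2$ and $m=1+1/(2H)$ respectively, and for $H<1/2$ one has $1/H>2$ and $1/H>1+1/(2H)$, so these thresholds lie strictly inside the stated ranges. In the corresponding boundary subregions the Corollary~\ref{corol3} bound on $|\vartheta_n|$ ceases to be dominated by $n^{m-1}$, and to close the argument one must exploit the structural nonnegativity of $D_n=n^{-1}\sum X_{k,n}^2$, which constrains the sign of $\vartheta_n$ one-sidedly and keeps the denominator bounded away from zero with probability tending to one, even when the a priori upper bound on $|\vartheta_n|$ is loose.
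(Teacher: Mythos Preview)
Your argument for part~$(i)$ is sound and close in spirit to the paper's. The genuine gap is in parts~$(ii)$ and~$(iii)$, and you have in fact located it yourself without closing it. The ergodic bound $\int_0^{n^{m-1}}X_s^2\,ds\sim V\,n^{m-1}$ gives the denominator growth rate $n^{m-1}$, and this is \emph{strictly smaller} than the numerator in the problematic subranges: $mH+m-2H-1>m-1$ as soon as $m>2$, and $2Hm+m-2H-2>m-1$ as soon as $m>1+\tfrac{1}{2H}$; for $H<\tfrac12$ both thresholds lie strictly below $1/H$, so the entire range in~$(iii)$ and part of~$(ii)$ are uncovered. Your proposed fix via the nonnegativity of $D_n=\tfrac1n\sum X_{k,n}^2$ cannot help: the issue is not that the denominator might be negative or even small, but that it must \emph{outgrow} a numerator that itself tends to $+\infty$. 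A one-sided sign constraint on $\vartheta_n$ yields no growth rate beyond $n^{m-1}$, so on the bad range the ratio still diverges.

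The paper's proof follows a different route that bypasses the ergodic theorem entirely. It applies Cauchy--Schwarz in the form $\int_0^T X_s^2\,ds\ge T^{-1}\bigl(\int_0^T X_s\,ds\bigr)^{2}$ and then uses that $\int_0^T X_s\,ds$ is a Gaussian variable whose variance $\sigma_n^2$ is bounded below by $\int_0^T\!\!\int_0^T \E B_s^H B_t^H\,ds\,dt=C\,n^{(m-1)(2H+2)}$. A direct Gaussian tail estimate then shows that, with probability tending to one, the denominator is at least of order $n^{(m-1)(2H+1)}$. This exponent exceeds the numerator exponent by $mH$ in case~$(ii)$ and by $1$ in case~$(iii)$, so for any $m$ in the stated ranges one can choose $\alpha>0$ small enough to force $K_n^1\to0$ (resp.\ $\widetilde K_n^1\to0$) in probability. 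The key point your approach misses is that the Cauchy--Schwarz lower bound via $\bigl(\int_0^T X_s\,ds\bigr)^{2}$ can be much larger than the ergodic rate $VT$, and it is precisely this extra growth that the paper exploits.
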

\begin{proof} $(i)$ Let  $1<m<\frac{2H+1}{H+1}<\frac1H$. Then exponent $mH+m-2H-1$ is negative. Indeed, for $H<\frac12$ we have inequality $\frac{2H+1}{H+1}<\frac1H$.
At first, choose $\alpha>0$ so that $mH+m-2H-1+\alpha<0$ and put $\beta= \alpha $. Then it is sufficient to note  that
$n^{mH+m-2H-1+\alpha}\rightarrow0$ and $\vartheta_n\to 0$ a.s. as $n\to\infty$ while the integral $\int_0^{n^{m-1}}X_s^2\,ds$ is increasing with probability 1 and tends to nonzero with probability 1 random variable as $n\to\infty.$

$(ii)$  Let  $\frac{2H+1}{H+1}\leq m\leq\frac1H$. Then exponent $mH+m-2H-1$ is positive. Choose $\alpha=\beta$. It is sufficient to prove that there exists such $\alpha>0$ that
$$
n^{-mH-m+2H+1-\alpha}\int_0^{n^{m-1}}X_s^2\,ds
+n^{-mH-m+2H+1-\alpha}\vartheta_n(\alpha)\to\infty$$
in probability as   $n\to\infty$.
In view of \eqref{est40} it  is equivalent  to
\[
n^{-mH-m+2H+1-\alpha}\int_0^{n^{m-1}}X_s^2\,ds
\to\infty
\quad\text{in probability\  as } n\to\infty.
\]

To establish this convergence  note that it follows from Cauchy--Schwarz inequality that
\[
n^{-mH-m+2H+1-\alpha}\int_0^{n^{m-1}}X_s^2\,ds
\geq n^{-mH-2m+2H+2-\alpha}\left(\int_0^{n^{m-1}}X_s \,ds\right)^2.
\]
Denote $\gamma=-mH-2m+2H+2-\alpha<0$. Without loss of generality suppose that $x_0>0.$ Note  that $\int_0^{n^{m-1}}X_s \,ds$ is a Gaussian process  with mean $$e_n=\frac{x_0}{\theta} \Big(e^{\theta n^{m-1}}-1\Big)\in(0, -\frac{x_0}{\theta})$$ and variance
\begin{multline}\label{est8}
\sigma_n^2=\int_0^{n^{m-1}}\!\!\int_0^{n^{m-1}}\E X_sX_t \,ds \,dt-e_n^2\\
=\int_0^{n^{m-1}}\!\!\int_0^{n^{m-1}}
\E\left(\theta e^{\theta s}\int_0^se^{-\theta u}B^H_u\,du +B^H_s\right)\\
\times\left(\theta e^{\theta t}\int_0^te^{-\theta z}B^H_z\,dz +B^H_t\right)ds\,dt.
\end{multline}
Since for any $u,z>0$ we have $\E B^H_uB^H_z>0$, variance can be bounded from below by the following value
\begin{equation}\begin{gathered}\label{est9} \sigma_n^2\geq \int_0^{n^{m-1}}\int_0^{n^{m-1}}\E B^H_sB^H_t\,ds \,dt\\=\frac12n^{(m-1)(2H+2)}\int_0^1\int_0^1\left(s^{2H}+t^{2H}-|s-t|^{2H}\right)\,ds \,dt=Cn^{(m-1)(2H+2)}.\end{gathered}
\end{equation}
Note that other terms in \eqref{est8} are of the same order so bound \eqref{est9} is sharp. Now, denoting  $\mathcal{N}(0,1)$ standard Gaussian random variable and \linebreak $\Phi(x)=(2\pi)^{-\frac12}\int_{-\infty}^xe^{-\frac{y^2}{2}}dy$, we can deduce that for any $A>0$ and sufficiently large $n$
\begin{equation}\begin{gathered}\label{est10} \P \left\{n^\gamma\left(\int_0^{n^{m-1}}X_s\,ds\right)^2\leq A^2\right\}=\P \left\{n^{ \frac{\gamma}{2}}\abs{\int_0^{n^{m-1}}X_s\,ds}\leq A\right\}\\=\P \left\{n^{ \frac{\gamma}{2}}\abs{\sigma_n\mathcal{N}(0,1)+e_n}\leq A\right\}=\Phi\left(\frac{A}{\sigma_nn^{ \frac{\gamma}{2}}}-\frac{e_n}{\sigma_n}\right)- \Phi\left(-\frac{A}{\sigma_nn^{ \frac{\gamma}{2}}}-\frac{e_n}{\sigma_n}\right)\\ \leq\abs{\Phi\left(\frac{A}{\sigma_nn^{ \frac{\gamma}{2}}}-\frac{e_n}{\sigma_n}\right)-\frac12}+\abs{\Phi\left(-\frac{A}{\sigma_nn^{ \frac{\gamma}{2}}}-\frac{e_n}{\sigma_n}\right)-\frac12}\leq 2\left(\frac{A}{\sigma_nn^{ \frac{\gamma}{2}}}-\frac{e_n}{\sigma_n}\right)\\ \leq \frac{C}{\sigma_nn^{ \frac{\gamma}{2}}} \leq\frac{C}{n^{\frac{mH}{2}-\frac{\alpha}{2}}}.
\end{gathered}
\end{equation}
Choosing $0<\alpha< {mH} $ we get the proof of $(ii)$.

$(iii)$. For $m>\frac1H$ exponent $2Hm+m-2H-2$ is positive. Therefore, we repeat the proof of $(ii)$ with the same $\sigma_n$ and  with $\widetilde{\gamma}=-2Hm-2m+2H+3-\alpha$ instead of $\gamma$. So, in the inequality similar to \eqref{est9}, we get in the right-hand side the upper bound $$\frac{C}{\sigma_nn^{ \widetilde{\gamma}/2}}\leq\frac{C}{n^{\frac{1}{2}-\frac{\alpha}{2}}}.$$
Choosing $0<\alpha< \frac12 $ we get the proof of $(iii)$.

\end{proof}
\begin{rem} We can prove  more than it was mentioned in $(i)$, namely, to establish that $$\int_0^{n^{m-1}}X_s^2\,ds\rightarrow\infty$$ a.s. as $n\to\infty$ (see Lemma \ref{lemA}  in Section \ref{sectionAp}).
\end{rem}

\begin{lemma}
Let $m>\frac{1}{2H}$.
Then $K_n^2\to0$    in probability as  $n\to\infty$.
\end{lemma}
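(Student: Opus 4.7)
The plan is to derive a lower bound in probability for the denominator of $K_n^2$ via a Gaussian anti-concentration argument, and to control the numerator by Chebyshev's inequality. By Corollary~\ref{corol3}(ii), for any $\beta>0$ we may write
\[
\frac1n\sum_{k=0}^{n^m-1}X_{k,n}^2=\int_0^{n^{m-1}}X_s^2\,ds+\vartheta_n(\beta),
\]
and one checks that, under the hypothesis $m>\tfrac{1}{2H}$, the parameter $\beta$ can be chosen so small (say $\beta<\tfrac12$) that in both subcases (a) and (b) of Corollary~\ref{corol3}(ii) the polynomial bound on $|\vartheta_n(\beta)|$ has exponent strictly less than $2Hm+m-2H-1$, the order at which the integral will sit on a favorable event.

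For that lower bound on the integral, I would use the Cauchy--Schwarz step already employed in the proof of Lemma~\ref{lemma est1}(ii): writing $Y_n:=\int_0^{n^{m-1}}X_s\,ds$, one has $\int_0^{n^{m-1}}X_s^2\,ds\ge n^{-(m-1)}Y_n^2$. Since $Y_n$ is Gaussian with bounded mean $e_n$ and variance $\sigma_n^2\ge C n^{(m-1)(2H+2)}$ (the estimate obtained at \eqref{est9}), standard Gaussian anti-concentration gives $\P\{|Y_n|\le\varepsilon\sigma_n\}\le C\varepsilon$ uniformly in $n$. Combining this with a Markov truncation $\{\zeta_1^2\le K\}$, valid because $\zeta_1\in\mathfrak{Z}$ has exponentially integrable square, on the resulting good event the denominator is at least $\tfrac{C}{2}\varepsilon^2\,n^{2Hm+m-2H-1}$ for all $n$ large enough, the comparison being forced by the choice $\beta<\tfrac12$.

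For the numerator, Chebyshev's inequality together with $\sup_n\E\xi_n^2<\infty$ yields $\P\{|\xi_n|>M\}\le C/M^2$. On the intersection of all good events,
\[
|K_n^2|\le\frac{M\,n^{m-2H}}{(C/2)\varepsilon^2\,n^{2Hm+m-2H-1}}=\frac{2M}{C\varepsilon^2\,n^{2Hm-1}},
\]
which tends to $0$ because the hypothesis $m>\tfrac{1}{2H}$ makes the exponent $2Hm-1$ strictly positive. Convergence in probability then follows by the usual scheme: given $\delta>0$, first fix $\varepsilon$, $M$, and $K$ so that the bad event has probability below $\delta$ uniformly in $n$, and then take $n$ large enough so that the displayed bound falls below any prescribed threshold.

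The main obstacle is the random prefactor $\zeta_1^2$ in the bound on $\vartheta_n$, which prevents a purely deterministic comparison between the integral lower bound and the correction; the Markov truncation is what makes this comparison go through. Once past that point the remainder is a routine combination of Cauchy--Schwarz, Gaussian anti-concentration, and Chebyshev's inequality, and the crucial place where the precise threshold $m>\tfrac{1}{2H}$ enters is the final exponent $2Hm-1$.
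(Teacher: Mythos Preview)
Your argument is correct and arrives at the same threshold $m>\tfrac{1}{2H}$ through the same final exponent $2Hm-1$, but it is organized differently from the paper. The paper bypasses Corollary~\ref{corol3} altogether: it applies Cauchy--Schwarz directly to the \emph{discrete} sum, lower-bounding $\tfrac1n\sum_{k}X_{k,n}^2$ by $n^{-m-1}\bigl(\sum_k X_{k,n}\bigr)^2$, and then computes a variance lower bound $\widetilde{\sigma}_n^2\ge Cn^{2mH+2m-2H}$ for the discrete Gaussian $\sum_k X_{k,n}$. It then uses a level-crossing decomposition $\P\{K_n^2\ge\varepsilon\}\le\P\{\text{denom}<x_n\}+\P\{|\xi_n|>x_n\varepsilon n^{2H-m}\}$ with $x_n=n^r$, and shows the double inequality $m-2H<r<2Hm+m-2H-1$ is solvable exactly when $m>\tfrac{1}{2H}$. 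Your route instead passes through the integral via Corollary~\ref{corol3}, reusing the variance estimate \eqref{est9} for $Y_n$, but pays for this by having to absorb the random correction $\vartheta_n(\beta)$, which you do via the truncation $\{\zeta_1^2\le K\}$. The trade-off is that your proof recycles existing estimates (for $\sigma_n^2$) at the cost of an extra conditioning step, while the paper's proof is more self-contained but needs a fresh computation of $\widetilde{\sigma}_n^2$; both yield the same lower-bound order $n^{2Hm+m-2H-1}$ for the denominator.
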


\begin{proof} We apply the same method as in the proof of Lemma \ref{lemma est1}, but to the sum $\frac1n
\sum_{k=0}^{n^m-1}X_{k,n}^2$ instead of integral $\int_0^{n^{m-1}}X_s^2\,ds$. As before,  suppose that $x_0>0.$ According to Cauchy--Schwarz inequality,
\begin{gather*}\frac1n
\sum_{k=0}^{n^m-1}X_{k,n}^2 \geq n^{-m-1} \left(\sum_{k=0}^{n^m-1}X_{k,n}\right)^2,
\end{gather*}
where $\sum_{k=0}^{n^m-1}X_{k,n} $ is the Gaussian random variable with mean $$0<\widetilde{e}_n=x_0\sum_{k=0}^{n^m-1}e^{\theta\frac{k}{n}}\leq n x_0\int_0^{n^{m-1}}e^{\theta s}\,ds\leq -\frac{n x_0}{\theta} $$ and variance \begin{gather*}\widetilde{\sigma}_n^2=\sum_{k,j=0}^{n^m-1}\E X_{k,n}X_{j,n}\geq \sum_{k,j=0}^{n^m-1}\E B^H_{k,n}B^H_{j,n}\\
=\frac12 \sum_{k,j=0}^{n^m-1}\left(\left(\tfrac{k}{n}\right)^{2H}
+\left(\tfrac{j}{n}\right)^{2H}-\abs{\tfrac{j}{n}-\tfrac{k}{n}}^{2H}\right)\\
=n^m\sum_{k=0}^{n^m-1}\left(\tfrac{k}{n}\right)^{2H}-\sum_{l=0}^{n^m-1}\left(\tfrac{l}{n}\right)^{2H}(n^m-l)
\\=n^{-2H}\sum_{l=0}^{n^m-1}l^{2H+1}=n^{-2H+m(2H+1)+m}\sum_{l=0}^{n^m-1} \left({\tfrac{l}{n^{m}}}\right)^{2H+1}\tfrac{1}{n^m}
\\\geq Cn^{-2H+m(2H+1)+m}.
\end{gather*} Therefore,
for any $\varepsilon>0$ and $x_n>0$
\begin{gather*}
\Prob\left\{K_n^2\geq\varepsilon\right\}
\leq\Prob\left\{\frac{\abs{\xi_n}n^{m-2H}}{\frac1n
\sum_{k=0}^{n^m-1}X_{k,n}^2}\geq \varepsilon\right\}
\le
\Prob\left\{\frac1n\sum_{k=0}^{n^m-1}X_{k,n}^2<x_n\right\}
\\+\Prob\left\{\abs{\xi_n}>x_n\varepsilon n^{2H-m}\right\}
\le
\Prob\left\{\frac{1}{n^{m+1}}\left(\sum_{k=0}^{n^m-1}X_{k,n}\right)^2<x_n\right\}
+\frac{\E\xi_n^2}{x_n^2\varepsilon^2n^{4H-2m}}\\ \leq \Prob\left\{\frac{1}{n^{m+1}}\left(\sum_{k=0}^{n^m-1}X_{k,n}\right)^2<x_n\right\}
+\frac{C}{x_n^2\varepsilon^2n^{4H-2m}}.
\end{gather*}
Similarly to  \eqref{est10}, \begin{gather*} \Prob\left\{\frac{1}{n^{m+1}}\left(\sum_{k=0}^{n^m-1}X_{k,n}\right)^2<x_n\right\}
=\Prob\left\{\abs{\widetilde{\sigma}_n\mathcal{N}(0,1)+\widetilde{e}_n}<x_n^{\frac12}n^{\frac{m+1}{2}}\right\}
\\=\Phi\left(-\frac{\widetilde{e}_n}{\widetilde{\sigma}_n}+\frac{x_n^{\frac12}n^{\frac{m+1}{2}}}{\widetilde{\sigma}_n}\right)-
\Phi\left(-\frac{\widetilde{e}_n}{\widetilde{\sigma}_n}-\frac{x_n^{\frac12}n^{\frac{m+1}{2}}}{\widetilde{\sigma}_n}\right)
 \leq C\left(\frac{x_n^{\frac12}n^{\frac{m+1}{2}}}{\widetilde{\sigma}_n}+\frac{n}{\widetilde{\sigma}_n}\right).
\end{gather*}

Evidently, for any $m>1$, $\dfrac{n}{\widetilde{\sigma}_n}\to 0$ as $n\to\infty$. Therefore,  to supply convergence $K_n^2\to 0$ in probability, we need to choose   $x_n$ in such a way that $\dfrac{1}{x_n^2n^{4H-2m}}\to0$ and $\dfrac{x_n^{\frac12}n^{\frac{m+1}{2}}}{\widetilde{\sigma}_n}\to0$ as $n\to\infty$. Put $x_n=n^r$. Then $r$ must satisfy the  double inequality $$m-2H<r<m+2Hm-2H-1.$$
This inequality can be satisfied only for $m>\frac{1}{2H}$ whence the proof follows.
\end{proof}

\begin{theorem}\label{main2}
Let $m>\frac{1}{2H}.$ Then the estimator $\widehat{\theta}_n(m)$, introduced in \eqref{estimator}, is consistent.
\end{theorem}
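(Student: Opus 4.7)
The plan is to start from the explicit representation \eqref{estimator}, which writes
\[
\widehat{\theta}_n(m)-\theta
=\frac{\theta\sum_{k=0}^{n^m-1}X_{k,n}\int_{\frac{k}{n}}^{\frac{k+1}{n}}(X_s-X_{k,n})\,ds
+\sum_{k=0}^{n^m-1}X_{k,n}\Delta B^H_{k,n}}
{\frac1n\sum_{k=0}^{n^m-1}X_{k,n}^2}.
\]
Consistency reduces to proving that this fraction tends to zero in probability. To that end I would feed the numerator into Corollary~\ref{corol2}(ii) and the denominator into Corollary~\ref{corol3}(ii), splitting the denominator as $\int_0^{n^{m-1}}X_s^2\,ds+\vartheta_n$ so that the error term $\vartheta_n$ is controlled by the same power of $n$ that governs one of the numerator terms.

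Next, I would separate two regimes according to whether $m\le 1/H$ or $m>1/H$, since the deterministic numerator bound changes its form at $m=1/H$. In either regime, combining the bounds from Corollaries~\ref{corol2} and~\ref{corol3} gives
\[
|\widehat{\theta}_n(m)-\theta|\le |\theta|\,\zeta^2\,K_n^1(m,\alpha,\beta)+K_n^2
\quad\text{or}\quad
|\widehat{\theta}_n(m)-\theta|\le |\theta|\,\zeta^2\,\widetilde{K}_n^1(m,\alpha,\beta)+K_n^2,
\]
for suitably chosen $\alpha,\beta>0$, where $K_n^1$, $\widetilde{K}_n^1$, $K_n^2$ are the random variables introduced just after Theorem~\ref{main1}. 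The convergence in probability of each of these three quantities is exactly what Lemma~\ref{lemma est1} and the preceding lemma on $K_n^2$ provide, so the conclusion will be obtained by invoking those results.

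Concretely, for $1/(2H)<m<(2H+1)/(H+1)$ (non-empty precisely because $H<1/2$) I would use part~(i) of Lemma~\ref{lemma est1} to get $K_n^1\to 0$ almost surely; for $(2H+1)/(H+1)\le m\le 1/H$ part~(ii) gives $K_n^1\to 0$ in probability; and for $m>1/H$ part~(iii) gives $\widetilde{K}_n^1\to 0$ in probability. In all three sub-cases the previous lemma yields $K_n^2\to 0$ in probability, since the hypothesis $m>1/(2H)$ is exactly the one needed there. Adding the contributions, the right-hand side above tends to zero in probability, so the same holds for $|\widehat{\theta}_n(m)-\theta|$.

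The main obstacle is only the bookkeeping of the exponents: one must choose $\alpha$ and $\beta$ in the two corollaries compatibly, and then verify that the hypothesis $m>1/(2H)$ (together with $H<1/2$) really does place $m$ into one of the three sub-cases of Lemma~\ref{lemma est1}, covering the full range from just above $1/(2H)$ up through $1/H$ and beyond. Once this is in hand the proof is a direct assembly, so I would keep the argument short: invoke the corollaries, pass to the three sub-cases, and conclude.
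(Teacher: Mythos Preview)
Your proposal is correct and follows exactly the route the paper intends: the paper states Theorem~\ref{main2} without an explicit proof because it is meant to be a direct consequence of Lemma~\ref{lemma est1} and the subsequent lemma on $K_n^2$, combined via Corollaries~\ref{corol2}(ii) and~\ref{corol3}(ii), which is precisely what you do.

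One small correction: your parenthetical remark that the range $1/(2H)<m<(2H+1)/(H+1)$ is ``non-empty precisely because $H<1/2$'' is not accurate. In fact $1/(2H)<(2H+1)/(H+1)$ is equivalent to $4H^2+H-1>0$, i.e.\ $H>(\sqrt{17}-1)/8\approx 0.39$, so for small $H$ this sub-case is empty and one starts directly in case~(ii) of Lemma~\ref{lemma est1}. This does not affect the argument at all---the three sub-cases of Lemma~\ref{lemma est1} still cover every $m>1/(2H)$---but the stated reason should be dropped or corrected.
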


\section{Auxiliary results}\label{sectionAp}

At first we establish an auxiliary result concerning the bounds for several   sums of integral type that will participate in the bounds for the numerator of \eqref{estimator}
{\begin{lemma}\label{l:2.4}
For any $m> 1$ and $n\geq 2$ there exists $C>0$ not depending on $n$ such that
\begin{enumerate}[(i)]
\item\label{l2.4-i}
$$\sum_{k=0}^{n^m-1}
\left(\tfrac{k+1}{n}\right)^H
 \log^2\tfrac{k+1}{n}\le
C n^{(m-1)H+m}\log^2 n,
$$
\item\label{l2.4-ii}
$$\sum_{k=0}^{n^m-1}
\left(\tfrac{k+1}{n}\right)^{2H} \log^4\tfrac{k+1}{n}
\le C n^{2H(m-1)+m}\log^4 n.
$$
\end{enumerate}
\end{lemma}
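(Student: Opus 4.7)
The plan is to prove both parts simultaneously by a crude uniform-in-$k$ bound on the summand, followed by multiplication by the number of terms $n^m$. I would not try to approximate the sum by an integral: the target bound is already of the order one obtains by replacing every term with the maximum term, so the coarser approach suffices and avoids any Riemann-sum bookkeeping.

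First, for the power factor, $0\le k\le n^m-1$ forces $\frac{k+1}{n}\le n^{m-1}$, hence $\bigl(\frac{k+1}{n}\bigr)^{H}\le n^{(m-1)H}$ and $\bigl(\frac{k+1}{n}\bigr)^{2H}\le n^{2(m-1)H}$ uniformly in $k$; these bounds already absorb the regime $\frac{k+1}{n}<1$, since $n^{(m-1)H}\ge 1$ for $n\ge 1$ and $m>1$. For the logarithmic factor, I split at $1$: if $\frac{k+1}{n}<1$ then $\bigl|\log\frac{k+1}{n}\bigr|\le\log n$, while if $\frac{k+1}{n}\ge 1$ then $\log\frac{k+1}{n}\le(m-1)\log n$. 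In both cases $\bigl|\log\frac{k+1}{n}\bigr|\le m\log n$ for $n\ge 2$, so that $\log^{2}\frac{k+1}{n}\le m^{2}\log^{2}n$ and $\log^{4}\frac{k+1}{n}\le m^{4}\log^{4}n$.

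Combining the two estimates, each summand in (\ref{l2.4-i}) is bounded by $m^{2}\,n^{(m-1)H}\log^{2}n$ and each summand in (\ref{l2.4-ii}) is bounded by $m^{4}\,n^{2(m-1)H}\log^{4}n$. Multiplying by the number of summands $n^{m}$ yields exactly the asserted bounds, with constants $C=m^{2}$ and $C=m^{4}$ respectively. There is no real conceptual obstacle; the only point that requires a moment of care is the split-at-$1$ argument for the logarithm, which is needed precisely because the grid points $\frac{k+1}{n}$ straddle $1$.
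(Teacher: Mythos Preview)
Your proof is correct and takes a genuinely different route from the paper. The paper rescales the index, writing
\[
\sum_{k=0}^{n^m-1}\Bigl(\tfrac{k+1}{n}\Bigr)^H\log^2\tfrac{k+1}{n}
= n^{(m-1)H}\sum_{k=0}^{n^m-1}\Bigl(\tfrac{k+1}{n^m}\Bigr)^H\log^2\Bigl(\tfrac{k+1}{n^m}\,n^{m-1}\Bigr),
\]
then expands $\log^2(ab)\le 2(\log^2 a+\log^2 b)$ and appeals to the boundedness of the Riemann sums $S(x^H\log^2 x,n^m)$ and $S(x^H,n^m)$ on $[0,1]$. This is slightly more delicate because it relies on the Riemann integrability of $x^H\log^2 x$ (with the convention $0\cdot\infty=0$ at the origin). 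Your approach bypasses all of this: you bound the power and logarithmic factors separately and uniformly in $k$, then multiply by the number of terms. The split-at-$1$ argument for the logarithm is exactly the right observation, and it delivers the bound with explicit constants $C=m^2$ and $C=m^4$, which the paper's argument does not. Both methods give the same order, but yours is shorter, more transparent, and self-contained; the paper's Riemann-sum viewpoint perhaps makes clearer \emph{why} the exponent $(m-1)H+m$ is natural (it is the scaling of $\int_0^{n^{m-1}}x^H\,dx$), but that is a matter of presentation rather than rigor.
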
}
\begin{proof}
We base the proof of both statements on the following evident relation: for any function $f:[0,1]\rightarrow  \R$ that is  Riemann integrable on $[0,1]$, and for any $m\geq 1$ integral sums $S(f(x), n^m):=\frac{1}{n^m}\sum_{k=0}^{n^m-1}f \left(\frac{k+1}{n^m}\right)$ tend  to integral $\int_0^1f(x)dx$ as $n\rightarrow\infty$. In particular,  these integral sums are bounded. Consider the statement \eqref{l2.4-i}. Evidently, \begin{equation*}\begin{gathered}
\sum_{k=0}^{n^m-1}
\left(\tfrac{k+1}{n}\right)^H\log^2\tfrac{k+1}{n}
=n^{(m-1)H}\sum_{k=0}^{n^m-1}
\left(\tfrac{k+1}{n^m}\right)^H
\log^2(\tfrac{k+1}{n^m}n^{m-1})\\
\le 2n^{(m-1)H}\sum_{k=0}^{n^m-1}
\left(\tfrac{k+1}{n^m}\right)^H\left(\log^2\left(\tfrac{k+1}{n^m}\right)+(m-1)^2\log^2n\right)\\
=2n^{(m-1)H+m}S(x^H\log^2x, n^m)
+2(m-1)^2n^{(m-1)H+m}\log^2n\cdot S\left(x^H,n^m\right)\\
\leq Cn^{(m-1)H+m}+ Cn^{(m-1)H+m}\log^2n
\le Cn^{(m-1)H+m}\log^2n\end{gathered}\end{equation*}
for   $n\geq 2$. Statement \eqref{l2.4-ii} is established similarly.

\end{proof}

Next auxiliary result establishes asymptotic behavior of integral $\int_0^T X_s^2\,ds$ as $T\to\infty.$

\begin{lemma}\label{lemA} Let process $X$ satisfy equation \eqref{equation}. Then
$\int_0^T X_s^2\,ds\to\infty$ with probability 1 as $T\to\infty$.
\end{lemma}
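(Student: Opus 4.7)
The plan is to split into the cases $\theta>0$ and $\theta<0$. For $\theta>0$ the assertion is already implicit in the proof of Theorem \ref{main1}: L'H\^{o}pital's rule applied to \eqref{solution} gives
\[
\lim_{T\to\infty}\frac{\int_0^T X_s^2\,ds}{e^{2\theta T}}
= \frac{1}{2\theta}\biggl(x_0 + \int_0^\infty e^{-\theta s}B^H_s\,ds\biggr)^{\!2}
\]
almost surely, and the limit is strictly positive a.s.\ as the square of a non-degenerate Gaussian variable, so $\int_0^T X_s^2\,ds \to \infty$ a.s.

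The delicate case is $\theta<0$, where $X$ is asymptotically stationary and bounded in distribution, so I would employ a second-moment argument. Starting from \eqref{solution}, $X_s$ is Gaussian with mean $x_0 e^{\theta s}\to 0$, and a direct computation of $\E X_s^2$ against the covariance kernel of $B^H$ shows $\E X_s^2 \to \sigma^2 > 0$, the stationary variance of the fractional Ornstein--Uhlenbeck process. Hence $\E \int_0^T X_s^2\,ds \sim \sigma^2 T$ as $T\to\infty$.

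To control the variance I would use Isserlis's formula, which for Gaussian $X$ gives
\[
\operatorname{Cov}(X_s^2,X_t^2)
= 2\operatorname{Cov}(X_s,X_t)^2
+ 4\,\E X_s\cdot\E X_t\cdot\operatorname{Cov}(X_s,X_t),
\]
with the quadratic term dominant (the linear term is suppressed by the exponential decay of $\E X_s$). Using the decay estimate $\operatorname{Cov}(X_s,X_t) = O(|t-s|^{2H-2})$ for the stationary fractional OU with $H<\frac12$, whose square is integrable on $\R$ since $4H-4<-2$, one obtains $\operatorname{Var}\bigl(\int_0^T X_s^2\,ds\bigr) = O(T)$. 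Chebyshev's inequality then yields $\P\bigl(\int_0^T X_s^2\,ds < \tfrac{\sigma^2}{2}T\bigr) \le C/T$ for all $T$ large. Along the dyadic subsequence $T_n = 2^n$ the series $\sum_n C/T_n$ converges, so by Borel--Cantelli $\int_0^{T_n} X_s^2\,ds \ge \tfrac{\sigma^2}{2}T_n$ for all large $n$, almost surely; monotonicity of $T\mapsto\int_0^T X_s^2\,ds$ and the choice $T_n\le T<T_{n+1}$ give $\int_0^T X_s^2\,ds \ge \tfrac{\sigma^2}{4}T$ eventually, upgrading to the continuous scale.

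The main technical obstacle is the rigorous justification of the covariance decay $|\operatorname{Cov}(X_s,X_t)| = O(|s-t|^{2H-2})$ for $H<\frac12$. This can be done either by a direct (and somewhat delicate) computation starting from the centered part of \eqref{solution} and the covariance kernel $R(s,t)$ of the supporting fractional Brownian motion, or by invoking the known asymptotic expansion of the covariance function of the stationary fractional Ornstein--Uhlenbeck process available in the literature.
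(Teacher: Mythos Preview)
Your argument is essentially correct but follows a different route from the paper. For $\theta>0$ the two agree. For $\theta<0$, the paper first observes that by monotonicity of $T\mapsto\int_0^T X_s^2\,ds$ it suffices to prove divergence \emph{in probability}, then bounds the Laplace transform $\E\exp\{-\lambda\int_0^T X_s^2\,ds\}$ via Cauchy--Schwarz, $\int_0^T X_s^2\,ds\ge T^{-1}\bigl(\int_0^T X_s\,ds\bigr)^2$, together with the elementary Gaussian identity $\E e^{-\lambda\xi^2}\le(2\lambda\sigma^2+1)^{-1/2}$; the proof reduces to showing that the variance of the Gaussian variable $T^{-1/2}\int_0^T X_s\,ds$ tends to infinity, which follows from the crude lower bound $\sigma^2(T)\ge\int_0^T\!\int_0^T\E B^H_sB^H_t\,ds\,dt=CT^{2H+1}$ already used in \eqref{est9}. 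This is entirely self-contained and requires no information about the covariance decay of the fractional Ornstein--Uhlenbeck process.

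Your variance/Borel--Cantelli approach yields more, namely an eventual linear lower bound $\int_0^T X_s^2\,ds\ge cT$ a.s., but the price is precisely the ``main technical obstacle'' you flag: the asymptotics $\operatorname{Cov}(X_s,X_t)=O(|t-s|^{2H-2})$, which the paper's argument sidesteps completely. Two small points to tighten: first, the reasoning ``integrable on $\R$ since $4H-4<-2$'' is not quite right, because $|\tau|^{4H-4}$ blows up at $\tau=0$; what you actually use is that the covariance is \emph{bounded} near the diagonal and decays like $|\tau|^{2H-2}$ at infinity, so its square is integrable. Second, $X$ is only asymptotically stationary, so $\operatorname{Cov}(X_s,X_t)$ differs from the stationary autocovariance by terms carrying factors $e^{\theta s}$ or $e^{\theta t}$; these are harmless for the $O(T)$ variance bound but should be mentioned.
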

\begin{proof} The result is obvious for $\theta>0$ therefore we consider only the case $\theta<0.$ Since $\int_0^T X_s^2\,ds$ is nondecreasing in $T>0$, it is sufficient to prove that $\int_0^T X_s^2\,ds\to\infty$ in probability.
For any $\lambda>0$ consider the moment generation function   $\Theta_T(\lambda)=\E \exp\left\{-\lambda \int_0^T X_s^2\,ds\right\}$ and $\Theta_\infty(\lambda)=\E \exp\left\{-\lambda \int_0^\infty X_s^2\,ds\right\}$  so that  $$\Theta_\infty(\lambda)=\lim_{T\rightarrow\infty}\Theta_T(\lambda).$$  Evidently,
    \begin{eqnarray} \begin{gathered}
    \int_0^T X_s^2\,ds\geq T^{-1} \left(\int_0^TX_s\,ds\right)^2,\nonumber
    \end{gathered}
    \end{eqnarray}
    whence $$\Theta_T(\lambda)\leq \Theta_T^{(1)}(\lambda):=\E\exp\left\{-\frac{ \lambda}{T}\left(\int_0^TX_s\,ds\right)^2\right\}.$$
     Random  variable $T^{-\frac12}\int_0^TX_s\,ds$ is Gaussian with  mean $m(T)$ and variance $\sigma^2(T)$, say.
     Note that for a Gaussian random variable $\xi=m+\sigma \mathcal{N}(0,1)$ we have that  \begin{equation*}%\label{sigma}
     \E\exp\left\{- \lambda\xi^2\right\}=\left(2\lambda\sigma^2+1\right)^{-\frac12}\exp\left\{-\frac{\lambda m^2}{2a\sigma^2+1}\right\}\leq \left(2\lambda\sigma^2+1\right)^{-\frac12}.
     \end{equation*}
      Therefore,   it is sufficient to prove that $$\lim_{T\rightarrow\infty}\sigma^2(T)=\infty.$$
    Similarly to \eqref{est9}.
    \begin{equation*}\begin{gathered}\sigma^2(T)\geq T^{2H+1}\int_0^1\int_0^1\Big(s^{2H}+t^{2H}-|s-t|^{2H}\Big)\,ds\,dt\to\infty
    \end{gathered}
    \end{equation*}
    as $T\to\infty$, whence the proof follows.
\end{proof}

\section{Simulations}\label{section7}

In this section, we present the results of simulation experiments.
We simulate 20 trajectories of the fractional Ornstein--Uhlenbeck process \eqref{equation} with $x_0=1$ for different values of $\theta$ and $H$.
Then we compute values of $\widehat\theta_n(m)$.
For each combination of $\theta$, $H$, $n$ and $m$ the mean of the estimator is reported.

In Tables \ref{tab1}--\ref{tab3} the true value of the drift parameter $\theta$ equals 2.
In this case the behavior of the estimators is almost the same for different values of $H$.
Also we can see that the value of $\widehat\theta_n(m)$ is determined by $n$ and does not depend on $m$.
Further, we consider the case of negative $\theta$.
We simulate the process with $H=0.45$, $\theta=-3$ and $m=4,5$.
The results are reported in Tables \ref{tab4}--\ref{tab5}. One  can see that the method works but the rate of convergence to the true value of a parameter is not very high. There are two reasons for this: the estimator is only consistent not strongly consistent and moreover, the trajectories are so irregular that even the length of the interval is small we can not ``catch'' the trajectory.

\begin{table}[h!]
\caption{\label{tab1}$\theta=2$, $m=2$.}
\vspace*{-5mm}
\small
\hspace{1.5cm}
\begin{center}
\begin{tabular}{|c|c|c|c|c|c|c|}\hline
$n$ & $5$ & $10$ & $50$ & $100$  & $500$ & $1000$\\
\hline
$H=0.05$ & $2.45763$ & $2.21281$ & $2.0395$ & $2.01911$ & $2.00300$ & $2.00100$
\\\hline
$H=0.25$ & $2.45766$ & $2.21281$ & $2.0395$ & $2.01911$ & $2.00300$ & $2.00100$
\\\hline
$H=0.45$ & $2.45794$ & $2.21281$ & $2.0395$ & $2.01911$ & $2.00300$ & $2.00100$
\\\hline
\end{tabular}
\end{center}
\end{table}

\begin{table}[h!]
\caption{\label{tab2}$\theta=2$, $m=3$.}
\vspace*{-5mm}
\small
\hspace{1.5cm}
\begin{center}
\begin{tabular}{|c|c|c|c|c|}\hline
$n$ & $5$ & $10$ & $20$ & $25$ \\
\hline
$H=0.05$ & $2.45763$ & $2.21281$ & $2.10231$ & $2.08109$
\\\hline
$H=0.25$ & $2.45763$ & $2.21281$ & $2.10231$ & $2.08109$
\\\hline
$H=0.45$ & $2.45763$ & $2.21281$ & $2.10231$ & $2.08109$
\\\hline
\end{tabular}
\end{center}
\end{table}

\begin{table}[h!]
\caption{\label{tab3}$\theta=2$, $m=4$.}
\vspace*{-5mm}
\small
\hspace{1.5cm}
\begin{center}
\begin{tabular}{|c|c|c|c|c|c|}\hline
$n$ & $5$ & $8$ & $10$ & $12$ & $15$\\
\hline
$H=0.05$ & $2.45763$ & $2.27092$ & $2.21281$ & $2.17240$ & $2.13566$
\\\hline
$H=0.25$ & $2.45763$ & $2.27092$ & $2.21281$ & $2.17240$ & $2.13566$
\\\hline
$H=0.45$ & $2.45763$ & $2.27092$ & $2.21281$ & $2.17240$ & $2.13566$
\\\hline
\end{tabular}
\end{center}
\end{table}

\begin{table}[h!]
\caption{\label{tab4}$\theta=-3$, $H=0.45$, $m=4$.}
\vspace*{-5mm}
\small
\hspace{1.5cm}
\begin{center}
\begin{tabular}{|c|c|c|c|c|c|}\hline
$n$ & $2$ & $4$ & $6$ & $8$ & $10$
\\\hline
$\vphantom{\Big|}\widehat\theta_n(4)$ & $-1.50913$ & $-2.41157$ & $-2.71411$ & $-2.9546$ & $-3.12058$
\\\hline
\end{tabular}
\end{center}
\end{table}

\begin{table}[h!]
\caption{\label{tab5}$\theta=-3$, $H=0.45$, $m=5$.}
\vspace*{-5mm}
\small
\hspace{1.5cm}
\begin{center}
\begin{tabular}{|c|c|c|c|c|c|}\hline
$n$ & $2$ & $3$ & $4$ & $5$ & $6$
\\\hline
$\vphantom{\Big|}\widehat\theta_n(5)$ & $-1.63396$ & $-2.04297$ & $-2.38237$ & $-2.5595$ & $-2.72538$
\\\hline
\end{tabular}
\end{center}
\end{table}

%\bibliographystyle{imsart-number}
%\bibliography{Kub-Mish-Ral-Sel}

\begin{thebibliography}{24}
% BibTex style file: imsart-number.bst, 2013-01-28
% Default style options (sort=1,type=number).
% Used options (sort=1,type=number).

\bibitem{BESO}
\begin{barticle}[author]
\bauthor{\bsnm{Belfadli},~\bfnm{R.}\binits{R.}},
  \bauthor{\bsnm{Es-Sebaiy},~\bfnm{K.}\binits{K.}} \AND
  \bauthor{\bsnm{Ouknine},~\bfnm{Y.}\binits{Y.}}
(\byear{2011}).
\btitle{{Parameter estimation for fractional Ornstein--Uhlenbeck processes:
  non-ergodic case}}.
\bjournal{{Frontiers in Science and Engineering}}
\bvolume{1}
\bpages{1-16}.
\end{barticle}
\endbibitem

\bibitem{bertin}
\begin{barticle}[author]
\bauthor{\bsnm{{Bertin}},~\bfnm{Karine}\binits{K.}},
  \bauthor{\bsnm{{Torres}},~\bfnm{Soledad}\binits{S.}} \AND
  \bauthor{\bsnm{{Tudor}},~\bfnm{Ciprian~A.}\binits{C.~A.}}
(\byear{2011}).
\btitle{{Drift parameter estimation in fractional diffusions driven by
  perturbed random walks.}}
\bjournal{{Stat. Probab. Lett.}}
\bvolume{81}
\bpages{243--249}.
\end{barticle}
\endbibitem

\bibitem{multi}
\begin{barticle}[author]
\bauthor{\bsnm{{Bianchi}},~\bfnm{S.}\binits{S.}},
  \bauthor{\bsnm{{Pantanella}},~\bfnm{A.}\binits{A.}} \AND
  \bauthor{\bsnm{{Pianese}},~\bfnm{A.}\binits{A.}}
(\byear{2013}).
\btitle{{Modeling stock prices by multifractional Brownian motion: an improved
  estimation of the pointwise regularity.}}
\bjournal{{Quant. Finance}}
\bvolume{13}
\bpages{1317--1330}.
\end{barticle}
\endbibitem

\bibitem{Bishwal11}
\begin{barticle}[author]
\bauthor{\bsnm{{Bishwal}},~\bfnm{Jaya P.~N.}\binits{J.~P.~N.}}
(\byear{2011}).
\btitle{{Minimum contrast estimation in fractional Ornstein--Uhlenbeck process:
  continuous and discrete sampling.}}
\bjournal{{Fract. Calc. Appl. Anal.}}
\bvolume{14}
\bpages{375--410}.
\end{barticle}
\endbibitem

\bibitem{BI}
\begin{barticle}[author]
\bauthor{\bsnm{{Brouste}},~\bfnm{A.}\binits{A.}} \AND
  \bauthor{\bsnm{{Iacus}},~\bfnm{S.~M.}\binits{S.~M.}}
(\byear{2013}).
\btitle{{Parameter estimation for the discretely observed fractional
  Ornstein--Uhlenbeck process and the Yuima R package.}}
\bjournal{{Computational Statistics}}
\bvolume{28}
\bpages{1529--1547}.
\end{barticle}
\endbibitem

\bibitem{CES}
\begin{barticle}[author]
\bauthor{\bsnm{C\'enac},~\bfnm{P.}\binits{P.}} \AND
  \bauthor{\bsnm{Es-Sebaiy},~\bfnm{K.}\binits{K.}}
(\byear{2012}).
\btitle{{Almost sure central limit theorems for random ratios and applications
  to LSE for fractional Ornstein-Uhlenbeck processes}}.
\bjournal{Preprint}.
\bnote{arXiv:1209.0137 [math.PR]}.
\end{barticle}
\endbibitem

\bibitem{CKM}
\begin{barticle}[author]
\bauthor{\bsnm{{Cheridito}},~\bfnm{Patrick}\binits{P.}},
  \bauthor{\bsnm{{Kawaguchi}},~\bfnm{Hideyuki}\binits{H.}} \AND
  \bauthor{\bsnm{{Maejima}},~\bfnm{Makoto}\binits{M.}}
(\byear{2003}).
\btitle{{Fractional {O}rnstein-{U}hlenbeck processes.}}
\bjournal{{Electron. J. Probab.}}
\bvolume{8}.
\end{barticle}
\endbibitem

\bibitem{CT}
\begin{barticle}[author]
\bauthor{\bsnm{{Clarke De la Cerda}},~\bfnm{Jorge}\binits{J.}} \AND
  \bauthor{\bsnm{{Tudor}},~\bfnm{Ciprian~A.}\binits{C.~A.}}
(\byear{2012}).
\btitle{{Least squares estimator for the parameter of the fractional
  Ornstein--Uhlenbeck sheet.}}
\bjournal{{J. Korean Stat. Soc.}}
\bvolume{41}
\bpages{341--350}.
\end{barticle}
\endbibitem

\bibitem{DMM}
\begin{barticle}[author]
\bauthor{\bsnm{Diedhiou},~\bfnm{A.}\binits{A.}},
  \bauthor{\bsnm{Manga},~\bfnm{C.}\binits{C.}} \AND
  \bauthor{\bsnm{Mendy},~\bfnm{I.}\binits{I.}}
(\byear{2011}).
\btitle{{Parametric estimation for SDEs with additive sub-fractional Brownian
  motion.}}
\bjournal{{Journal of Numerical Mathematics and Stochastics}}
\bvolume{3}
\bpages{37--45}.
\end{barticle}
\endbibitem

\bibitem{Es-Sebaiy}
\begin{barticle}[author]
\bauthor{\bsnm{{Es-Sebaiy}},~\bfnm{Khalifa}\binits{K.}}
(\byear{2013}).
\btitle{{Berry-Ess\'een bounds for the least squares estimator for discretely
  observed fractional Ornstein--Uhlenbeck processes.}}
\bjournal{{Stat. Probab. Lett.}}
\bvolume{83}
\bpages{2372--2385}.
\end{barticle}
\endbibitem

\bibitem{ESN}
\begin{barticle}[author]
\bauthor{\bsnm{Es-sebaiy},~\bfnm{Khalifa}\binits{K.}} \AND
  \bauthor{\bsnm{Ndiaye},~\bfnm{Djibril}\binits{D.}}
(\byear{2014}).
\btitle{On drift estimation for non-ergodic fractional Ornstein--Uhlenbeck
  process with discrete observations}.
\bjournal{Afr. Stat.}
\bvolume{9}
\bpages{615--625}.
\end{barticle}
\endbibitem

\bibitem{HN}
\begin{barticle}[author]
\bauthor{\bsnm{{Hu}},~\bfnm{Yaozhong}\binits{Y.}} \AND
  \bauthor{\bsnm{{Nualart}},~\bfnm{David}\binits{D.}}
(\byear{2010}).
\btitle{{Parameter estimation for fractional Ornstein-Uhlenbeck processes.}}
\bjournal{{Stat. Probab. Lett.}}
\bvolume{80}
\bpages{1030--1038}.
\end{barticle}
\endbibitem

\bibitem{HuXZ}
\begin{barticle}[author]
\bauthor{\bsnm{Hu},~\bfnm{Yaozhong}\binits{Y.}},
  \bauthor{\bsnm{Nualart},~\bfnm{David}\binits{D.}},
  \bauthor{\bsnm{Xiao},~\bfnm{Weilin}\binits{W.}} \AND
  \bauthor{\bsnm{Zhang},~\bfnm{Weiguo}\binits{W.}}
(\byear{2011}).
\btitle{Exact maximum likelihood estimator for drift fractional {B}rownian
  motion at discrete observation}.
\bjournal{Acta Mathematica Scientia}
\bvolume{31}
\bpages{1851 - 1859}.
\end{barticle}
\endbibitem

\bibitem{HuSong13}
\begin{bincollection}[author]
\bauthor{\bsnm{{Hu}},~\bfnm{Yaozhong}\binits{Y.}} \AND
  \bauthor{\bsnm{{Song}},~\bfnm{Jian}\binits{J.}}
(\byear{2013}).
\btitle{{Parameter estimation for fractional Ornstein-Uhlenbeck processes with
  discrete observations.}}
In \bbooktitle{{Malliavin calculus and stochastic analysis. A Festschrift in
  honor of David Nualart. New York, NY: Springer}}
\bpages{427--442}.
\end{bincollection}
\endbibitem

\bibitem{KB}
\begin{barticle}[author]
\bauthor{\bsnm{{Kleptsyna}},~\bfnm{M.~L.}\binits{M.~L.}} \AND
  \bauthor{\bsnm{{Le Breton}},~\bfnm{A.}\binits{A.}}
(\byear{2002}).
\btitle{{Statistical analysis of the fractional Ornstein--Uhlenbeck type
  process.}}
\bjournal{{Stat. Inference Stoch. Process.}}
\bvolume{5}
\bpages{229--248}.
\end{barticle}
\endbibitem

\bibitem{KMM}
\begin{barticle}[author]
\bauthor{\bsnm{Kozachenko},~\bfnm{Y.}\binits{Y.}},
  \bauthor{\bsnm{Melnikov},~\bfnm{A.}\binits{A.}} \AND
  \bauthor{\bsnm{Mishura},~\bfnm{Y.}\binits{Y.}}
(\byear{2014}).
\btitle{On drift parameter estimation in models with fractional {B}rownian
  motion}.
\bjournal{Statistics}.
\bnote{Advance online publication. doi: 10.1080/02331888.2014.907294}.
\end{barticle}
\endbibitem

\bibitem{Mendy}
\begin{barticle}[author]
\bauthor{\bsnm{{Mendy}},~\bfnm{Ibrahima}\binits{I.}}
(\byear{2013}).
\btitle{{Parametric estimation for sub-fractional Ornstein-Uhlenbeck process.}}
\bjournal{{J. Stat. Plann. Inference}}
\bvolume{143}
\bpages{663--674}.
\end{barticle}
\endbibitem

\bibitem{MRSS}
\begin{bincollection}[author]
\bauthor{\bsnm{Mishura},~\bfnm{Yuliya}\binits{Y.}},
  \bauthor{\bsnm{Ralchenko},~\bfnm{Kostiantyn}\binits{K.}},
  \bauthor{\bsnm{Seleznev},~\bfnm{Oleg}\binits{O.}} \AND
  \bauthor{\bsnm{Shevchenko},~\bfnm{Georgiy}\binits{G.}}
(\byear{2014}).
\btitle{Asymptotic Properties of Drift Parameter Estimator Based on Discrete
  Observations of Stochastic Differential Equation Driven by Fractional
  Brownian Motion}.
In \bbooktitle{Modern Stochastics and Applications},
(\beditor{\bfnm{Volodymyr}\binits{V.}~\bsnm{Korolyuk}},
  \beditor{\bfnm{Nikolaos}\binits{N.}~\bsnm{Limnios}},
  \beditor{\bfnm{Yuliya}\binits{Y.}~\bsnm{Mishura}},
  \beditor{\bfnm{Lyudmyla}\binits{L.}~\bsnm{Sakhno}} \AND
  \beditor{\bfnm{Georgiy}\binits{G.}~\bsnm{Shevchenko}}, eds.).
\bseries{Springer Optimization and Its Applications}
\bvolume{90}
\bpages{303-318}.
\end{bincollection}
\endbibitem

\bibitem{Nourdin}
\begin{barticle}[author]
\bauthor{\bsnm{{Nourdin}},~\bfnm{Ivan}\binits{I.}}
(\byear{2008}).
\btitle{{Asymptotic behavior of weighted quadratic and cubic variations of
  fractional Brownian motion.}}
\bjournal{{Ann. Probab.}}
\bvolume{36}
\bpages{2159--2175}.
\end{barticle}
\endbibitem

\bibitem{Prakasa04}
\begin{barticle}[author]
\bauthor{\bsnm{{Prakasa Rao}},~\bfnm{B.~L.~S.}\binits{B.~L.~S.}}
(\byear{2004}).
\btitle{{Sequential estimation for fractional Ornstein--Uhlenbeck type
  process.}}
\bjournal{{Sequential Anal.}}
\bvolume{23}
\bpages{33--44}.
\end{barticle}
\endbibitem

\bibitem{Tanaka13}
\begin{barticle}[author]
\bauthor{\bsnm{{Tanaka}},~\bfnm{Katsuto}\binits{K.}}
(\byear{2013}).
\btitle{{Distributions of the maximum likelihood and minimum contrast
  estimators associated with the fractional Ornstein--Uhlenbeck process.}}
\bjournal{{Stat. Inference Stoch. Process.}}
\bvolume{16}
\bpages{173--192}.
\end{barticle}
\endbibitem

\bibitem{tudorviens}
\begin{barticle}[author]
\bauthor{\bsnm{{Tudor}},~\bfnm{Ciprian~A.}\binits{C.~A.}} \AND
  \bauthor{\bsnm{{Viens}},~\bfnm{Frederi~G.}\binits{F.~G.}}
(\byear{2007}).
\btitle{{Statistical aspects of the fractional stochastic calculus.}}
\bjournal{{Ann. Stat.}}
\bvolume{35}
\bpages{1183--1212}.
\end{barticle}
\endbibitem

\bibitem{xiaoZX}
\begin{barticle}[author]
\bauthor{\bsnm{{Xiao}},~\bfnm{Weilin}\binits{W.}},
  \bauthor{\bsnm{{Zhang}},~\bfnm{Weiguo}\binits{W.}} \AND
  \bauthor{\bsnm{{Xu}},~\bfnm{Weidong}\binits{W.}}
(\byear{2011}).
\btitle{{Parameter estimation for fractional Ornstein--Uhlenbeck processes at
  discrete observation.}}
\bjournal{{Appl. Math. Modelling}}
\bvolume{35}
\bpages{4196--4207}.
\end{barticle}
\endbibitem

\bibitem{ZXZN}
\begin{barticle}[author]
\bauthor{\bsnm{Zhang},~\bfnm{Pu}\binits{P.}},
  \bauthor{\bsnm{Xiao},~\bfnm{Wei-lin}\binits{W.-l.}},
  \bauthor{\bsnm{Zhang},~\bfnm{Xi-li}\binits{X.-l.}} \AND
  \bauthor{\bsnm{Niu},~\bfnm{Pan-qiang}\binits{P.-q.}}
(\byear{2014}).
\btitle{Parameter identification for fractional {O}rnstein--{U}hlenbeck
  processes based on discrete observation}.
\bjournal{Economic Modelling}
\bvolume{36}
\bpages{198--203}.
\end{barticle}
\endbibitem

\end{thebibliography}

\end{document}